\tikzset{>=stealth}
\newtheorem{thm}{Theorem}[section]
\newtheorem{lem}[thm]{Lemma}
\newtheorem{prop}[thm]{Proposition}
\newtheorem{ques}{Question}
\theoremstyle{definition}
\newtheorem{exam}{Example}
\theoremstyle{remark}
\newtheorem{rem}[thm]{Remark}
\numberwithin{equation}{section}
\def \N {\mathbb{N}}
\def \Z {\mathbb{Z}}
\def \R {\mathbb{R}}
\def \S {\mathbb{S}}
\def \O {\mathcal{O}}
\def \A {\mathcal A}
\def \F {\mathcal F}
\def \G {\mathcal{G}}
\def \P {\mathcal P}
\def \ol {\overline}
\def \x {\textbf{x}}
\def \y {\textbf{y}}
\def \z {\textbf{z}}
\def \0 {\bf 0}
\begin{document}
	\title{Some dynamical properties related to polynomials}

	\author{Qinqi Wu}

	\address{Department of Mathematics, University of Science and Technology of China, Hefei, Anhui, 230026, P.R. China}
	
	\email{qinqiwu@mail.ustc.edu.cn}

\subjclass[2020]{Primary: 37B05; 37B20}
	\keywords{thickly syndetic, integral polynomial, induced system.}

	\thanks{ }
	
	\date{March 28, 2023}
	\begin{abstract}
Let $d\in\N$ and $p_i$ be an integral polynomial with $p_i(0)=0,1\leq i\leq d$. It is shown that if $S$ is thickly syndetic in $\Z$, then $\{(m,n)\in\Z^2:m+p_i(n),m+p_2(n),\ldots,m+p_d(n)\in S\}$ is thickly syndetic in $\Z^2$. 	
	
	Meanwhile, we construct a transitive, strong mixing and non-minimal topological dynamical system $(X,T)$, such that the set $\{x\in X:\forall\ \text{open}\ U\ni x,\exists\ n\in\Z \ \text{s.t.}\ T^{n}x\in U,T^{2n}x\in U\}$ is not dense in $X$.
	
	\end{abstract}
	
	\maketitle

\section{introduction}

In 1976, Furstenberg gave a dynamical proof of the Szemeredi's theorem via establishing the multiple ergodic recurrence theorem (\cite[Theorem 1.5]{F1}), which states that in any measure preserving system $(X,\mathcal{X}, \mu,T)$, any $d\in\N$ and $A\in\mathcal{X}$ with $\mu(A)>0$ there always exists $n\in\N$ such that $\mu(A\cap T^{-n}A\cap\cdots\cap T^{-dn}A)>0$. 
A conterpart in topological dynamics is the topological multiple recurrence theorem, which states that for any non-empty open set $A$ in a minimal topological system $(X,T)$ and $d\in\N$, there always exists $n\in\N$ such that $A\cap T^{-n}A\cap\cdots\cap T^{-dn}A\ne\varnothing$.

\medskip
An easy implication of the multiple recurrence theorem is the classical van der Waerden's theorem (\cite[Theorem 2.6]{F2}, \cite[Theorem 1.4]{FW}) on arithmetic progressions.
Van der Waerden's theorem states that each piecewise syndetic subset of $\Z$ contains arbitrarily long arithmetic progressions. That is, if $S\subset\Z$ is piecewise syndetic, then for all $d\in\N$, the set $\{(m,n)\in\Z^2:m,m+n,\ldots,m+(d-1)n\in S,n\ne 0\}$ is not empty. Furstenberg and Glasner \cite{FG} obtained the following beautiful result using the Stone-\v{C}ech compactification of $\Z$.

\medskip
\noindent{\bf Theorem}{\it (Furstenberg-Glasner).}
	Let $d\in\N$ and $S$ be piecewise syndetic in $\Z$, then $$\{(m,n)\in\Z^2:m,m+n,\ldots,m+(d-1)n\in S\}$$ is piecewise syndetic in $\Z^2$.

\medskip

In \cite{G}, a related system $N_d(X)$ under the action of the group $\G_d=\langle\tau_d,T^{(d)}\rangle$ generated by $\tau_d=T\times T^2\times\cdots\times T^d$ and $T^{(d)}=T\times\cdots\times T$ ($d$-times) was studied, where $N_d(X)$ is the orbit closure of $x^{\otimes d}=(x,\ldots,x)$ under $\G_d$. It was shown in \cite{G} that $(N_d(X),\langle\tau_d,T^{(d)}\rangle)$ is also minimal for a minimal topological dynamic system $(X,T)$. The Furstenberg-Glasner Theorem can be deduced by the minimality of $N_d(X)$. For more properties of $N_d(X)$, see \cite{GHSWY}, \cite{LQ}.

Later Beiglb\"{o}ck \cite{B} provided a combinatorial proof for the Furstenberg-Glasner's result just using van der Waerden's theorem. Bergelson and Hindman \cite{BH} extended this result to apply to many notions of largeness in arbitrary semigroups and to partition regular structures other than arithmetic progressions.

Recently, Huang, Shao and Ye \cite{HSY} confirmed one of the questions asked in \cite[Question 4.7]{BH}, which is a polynomial generalization of Furstenberg-Glasner's theorem:
Let $d\in\N$ and $p_i$ be an integral polynomial with $p_i(0)=0,1\leq i\leq d$, if $S$ is  piecewise syndetic in $\Z$, then $$\{(m,n)\in\Z^2:m+p_i(n),m+p_2(n),\ldots,m+p_d(n)\in S\}$$ is piecewise syndetic in $\Z^2$.
Futhermore, for a t.d.s. $(X,T)$ and a family of integral polynomials $\A=\{p_1,\ldots,p_d\}$ with $p_i(0)=0$, an induced system $N_{\infty}(X,\A)$ under the action of the group $\G:=\langle T^{\infty},\sigma\rangle$ generated by $T^{\infty}=\cdots\times T\times T\times \cdots$ and the shift map $\sigma$ was proposed in \cite{HSY}. 
It was shown that $(N_{\infty}(X,\A),\langle T^{\infty},\sigma\rangle)$ has dense minimal points if and only if so is $(X,T)$.

In this paper, we will study more dynamical properties of $N_{\infty}(X,\A)$, we have

\medskip
\noindent{\bf Theorem A.}
	Let $(X,T)$ be a topological dynamical system and $d\in\N$. Let $\A=\{p_1,\ldots,p_d\}$ be an family of integral polynomials with $p_i(0)=0,1\leq i\leq d$. Then $(X,T)$ has only one minimal point if and only if so is $(N_{\infty}(X,\A),\langle T^{\infty},\sigma\rangle)$.

\medskip

Moreover, we give an affirmative answer to one of another questions asked in \cite[Question 4.7]{BH}. That is, 

\medskip
\noindent{\bf Theorem B.}
Let $d\in\N$ and $p_i$ be an integral polynomial with $p_i(0)=0,1\leq i\leq d$. If $S$ is thickly syndetic in $\Z$, then 
\begin{equation*}
	\{(m,n)\in\Z^2:m+p_i(n),m+p_2(n),\ldots,m+p_d(n)\in S\} 
\end{equation*}
is thickly syndetic in $\Z^2$.

\medskip

By \cite[Corollary 1.8]{BL}, for any minimal topological dynamical system $(X,T)$, all integral polynomials $p_1,\ldots,p_d$ vanishing at zero, the set $\{x\in X:\forall \ \text{open }U\ni x,\exists\  n \ \text{s.t.}\ T^{p_1(n)}x\in U,\ldots,T^{p_d(n)}x\in U\}$ is dense in $X$. It is natural to ask whether the set is still dense when $(X,T)$ is just transitive. 
Unfortunately, it is not true, we state it as follows.

\medskip
\noindent{\bf Theorem C.}
There exists a non-minimal $(X,T)$ which is transitive and strong mixing, such that the set $\{x\in X:\forall \ \text{open }U\ni x,\exists\ n \ \text{s.t.}\ T^{n}x\in U,T^{2n}x\in U\}$ is a single point set, hence, not dense in $X$.

\medskip

The paper is organized as follows. In section 2, the basic notions used in the paper are introduced. In section 3, we study the properties of $N_{\infty}(X,T)$ and prove Theorem A. In section 4, we prove Theorem B. In section 5, we give a proof to Theorem C.

\medskip
\noindent{\bf Acknowledgement.} The author would like to thank Professors Song Shao and Xiangdong Ye for helpful discussions and remarks.

\bigskip
\section{Preliminaries}

In this section, we recall some basic notions and results we need in the following sections.
\subsection{Topological dynamical system}
By a topological dynamical system (t.d.s. for short) we mean a pair $(X,T)$, where $X$ is a compact metric space with a metric $\rho$ and $T:X\rightarrow X$ is a homeomorphism.
More generally, by a topological dynamical system we mean a triple $(X,G,\phi)$ with a compact metric space $X$, a topological group $G$ and a homeomorphism $\phi:G\rightarrow \text{Homeo}(X)$, where $\text{Homeo}(X)$ denotes the group of homeomorphisms of $X$. In this case, we say $G$ acts on $X$ continuously.
For brevity, we usually use $(X,G)$ to denote $(X,G,\phi)$ and use
$gx$ or $g(x)$ instead of $(\phi(g))(x)$ for $g\in G$ and $x\in X$.  Note that the system $(X,T)$ corresponds
to the case of $G$ being $\Z$.

\medskip
Let $(X,G)$ be a t.d.s. and $x\in X$. We denote the orbit of $x$ by {\it $\O(x,G)$}$=\{g^{n}x:g\in G\}$.
A point $x\in X$ is called a {\it fixed point} if $gx=x$ for any $g\in G$. We denote the set of all fixed points by $Fix(X,G)$.
A point $x\in X$ is called a {\it transitive point} if the orbit of $x$ is dense in $X$, i.e., $\overline{\O(x,G)}=X$. We denote the set of all transitive points by $Trans_{G}(X)$.

\medskip
A subset $A\subseteq X$ is called {\it invariant} if $GA=A$. When $Y\subseteq X$ is a closed and invariant subset of $(X,G)$, we say the system $(Y,G|_Y)$ is a {\it subsystem} of $(X,G)$. Usually we omit the subscript, and denote $(Y,G|_Y)$ by $(Y,G)$. 
A t.d.s. $(X,G)$ is called {\it minimal} if $X$ contains no proper non-empty closed invariant subsets. It is easy to verify that a t.d.s. is minimal if and only if every point $x\in X$ is a transitive point. In a t.d.s. $(X,G)$, we say that a point $x\in X$ is {\it minimal} if $(\overline{\O(x,G)},G)$ is minimal. The set of all minimal points of $(X,G)$ is defined by $Min(X,G)$. 

\medskip
Let $x\in X$ and $U\subseteq X$ let $N_{G}(x,U)=\{g\in G:gx\in U\}$. 
A point $x\in X$ is said to be  {\it recurrent} if for every neighborhood $U$ of $x$, $N_{G}(x,U)$ is infinite. Equivalently, $x\in X$ is recurrent if and only if there is a sequence $g_i\in G$ such that $g_{i}x\rightarrow x,i\rightarrow\infty$. The set of all recurrent points of $(X,G)$ is defined by $Rec(X,G)$. Obviously, we have $$Fix(X,G)\subset Min(X,G)\subset Rec(X,G).$$

\medskip
If $(X,T)$ and $(Y,T)$ are two t.d.s., their {\it product system} is the system $(X\times Y,T\times S)$. We write $(X^n,T^{(n)})$ for the $n$-fold product system $(X\times \cdots\times X,T\times\cdots\times T)$. The diagonal of $X^n$ is $\Delta_n(X)=\{(x,\ldots,x)\in X^n:x\in X\}$.
Let $U,V\subset X$ be two non-empty open sets, the {\it hitting time set} of $U$ and $V$ is denoted by
$$N_{T}(U,V)=\{n\in\Z:U\cap T^{-n}V\ne\emptyset\}.$$

\medskip
We say $(X,T)$ is {\it topological transitive} if for any non-empty sets $U,V\subset X$, the hitting time $N_{T}(U,V)$ is non-empty. When $X$ is a metric compact space, $(X,T)$ is transitive if and only if there is some point $x\in Trans_T(X)$.

\medskip
We say $(X,T)$ is {\it weakly mixing} if the product system $(X\times X,T\times T)$ is transitive; {\it strong mixing} if $N_{T}(U,V)$ is cofinite, i.e. there exists $N\in\N$ s.t. $U\cap T^{-n}V\ne\emptyset$, $\forall n\geq N$.

\medskip
\subsection{Furstenberg families}
Let $\P=\P(\Z)$ be the collection of non-empty subsets of $\Z$. A subset $\F$ of $\P$ is a {\it (Furstenberg) family}, if it is hereditary upwards, i.e. $F_1\subseteq F_2$ and $F_1\in\F$ imply $F_2\subseteq \F$. A family $\F$ is proper if it is a proper subset of $\P$, i.e. neither empty nor all of $\P$. If a proper family $\F$ is closed under finite intersections, then $\F$ is called a filter. For a family $\F$, the dual family is $$\F^*=\{F\in\P:Z\setminus F\notin \F\}=\{F\in\P:F\cap F'\ne\varnothing \ \text{for all}\ F'\in\F\}.$$

We say that a subset $S$ of $\Z$ is
\begin{enumerate}
	\item {\it syndetic} if it has a bounded gap;
	\item {\it thick} if it contains arbitrarily long runs of integers;
	\item {\it piecewise syndetic} if it is the intersection of a syndetic set with a thick set;
	\item {\it thickly syndetic} if for every $N\in\N$ the positions where interval with length $N$ runs begin form a syndetic set.
\end{enumerate}    
The collection of all syndetic (resp. thick, piecewise syndetic,  thickly syndetic) subsets is denoted by $\F_s$(resp. $\F_t$,$\F_{ps}$,$\F_{ts}$). We have $\F_{ts}$ is a filter and $\F_{ps}^*=\F_{ts},\F_{ts}^*=\F_{ps}.$

\medskip
Analogously by the definition in $\Z$,  we can define syndetic (resp. thick, piecewise syndetic, thickly syndetic) set in $\Z^2$.

\medskip

\subsection{Shift system, subshift}
Let $F$ be a finite set with at least two elements, say $F=\{0,1,\ldots,s-1\}$ with $s\in\N,s\geq 2$. Consider $F$ as a finite discrete topological space and put $\Omega:=F^{\Z}$. Endowed with the product topology, $\Omega_F$ is a compact metric space. The points of $\Omega_F$ are two-sided infinite sequences $x=(x_i)_{i\in\Z}$ with $x_i\in F$ for all $i\in\Z$. Define a mapping $\sigma:\Omega\rightarrow\Omega$ by $$(\sigma x_i):=x_{i+1},i\in\Z\quad \text{for} \ x=(x_i)_{i\in\Z}\in \Omega.$$
Clearly, $\sigma$ is a bijection of $\Omega$ onto itself with inverse $\sigma^{-1}$, given by $(\sigma x )_i=x_{i-1}$ for $i\in\Z$. So $\sigma$ is a homeomorphism. It is called the {\it shift}. The discrete flow $(\Omega,\sigma)$ will be called a {\it two-sided shift system}. 
$(X,\sigma)$ is called a {\it subshift} if $X$ is a closed subset of some full shift $\Omega$ that is invariant under the action of $\sigma$.
Denote by $\Z_+$ the set of all non-negative integers. Similarly, we may define {\it one-sided shift system} when $\Omega=F^{\Z^+}$. In such a case, $\sigma$ is a continuous surjective map.

\medskip
If $k\in\N$, then a {\it word of length $k$} is an element of the set $F^k:=F\times\cdots\times F$ ($k$-times). If $b\in F^k$, then $b=b_0\cdots b_{k-1}$ with $b_i\in F$ for $i=0,\ldots,k-1$. The $b_i$ are called the {\it entries} of $b$, the $j$-th entry is $b_{j-1}$. The set of finite words over $F$ will be denoted by $F^*:=\cup\{F^k:k\in\Z^+\}$ ($F^0$ consists of the empty word), it is a semigroup under the operation $(b,c)\mapsto bc:=b_0\cdots b_{|b|-1}c_0\cdots c_{|c|-1}$.
We write $b^k$ for $bb\cdots b$ ($k$-times, $k\geq 2$).

\medskip
A finite non-empty block $b$ is said to {\it occur in} a block $c$, equivalently $c$ is said to {\it contain} $b$ whenever there exist blocks $r$ and $l$ such that $c=lbr$. In this case we say that $b$ is {\it preceded by $l$} and {\it followed by $r$} in $c$. If $l=\varnothing$ (resp. $r=\varnothing$) then $c$ is said to {\it begin with $b$} (resp. {\it end with $b$}).

Let $(X,\sigma)$ be a subshift. If $x\in X$ and $j\in\Z$ then the block $x_j\cdots x_{j+n}$ is often denoted by $x[j;j+n]$. A non-empty block $b$ is said to {\it occur in $x\in X$ at place $j$} whenever $$b=x_j\cdots x_{j+|b|-1}=x[j;j+|b|-1].$$
If $b$ is a finite block and $j\in\Z$, then the {\it cylinder based on $b$ at place $j$} is the set of all elements in $\Omega$ in which $b$ occurs at palce $j$, that is, the set $$C_j[b]:=\{x\in X:x[j;j+|b|-1]=b\}.$$ Every cylinder in $\Omega$ is closed and open.

\bigskip

\section{The induced system $N_{\infty}(X,\A)$}

In this section, for a given finite set $\A$ of integral polynomials and t.d.s. $(X,T)$, we introduce the induced system $N_{\infty}(X,\A)$. We will study more properties about minimal and recurrent points of $N_{\infty}(X,\A)$. Theorem B is equivalent to Theorem \ref{min-N}.

\medskip
\subsection{The definition of $N_{\infty}(X,\A)$}
Let $d\in\N$ and $\A=\{p_1,p_2,\ldots,p_d\}$ be a collection of integral polynomials with $p_i(0)=0,1\leq i\leq d$. The point of $(X^d)^{\Z}$ is denoted by $$\x=(\x_n)_{n\in\Z}=((x^{(1)}_n,x_n^{(2)},\ldots,x_n^{(d)}))_{n\in\Z}.$$
Let $\vec{p}(n)=(p_1(n),p_2(n),\ldots,p_d(n))$ and let $T^{\vec{p}(n)}:X^d\rightarrow X^d$ be defined by $$T^{\vec{p}(n)}(x_1,x_2,\ldots,x_d)=(T^{p_1(n)}x_1,T^{p_2(n)}x_2,\ldots,T^{p_d(n)}x_d).$$

Define $T^{\infty}:(X^d)^{\Z}\rightarrow (X^d)^{\Z}$ such that $$T^{\infty}(\x_n)_{n\in\Z}=(T^{(d)}\x_n)_{n\in\Z}.$$
Let $\sigma:(X^d)^{\Z}\rightarrow (X^d)^{\Z}$
be the shift map, i.e., for all $(\x_n)_{n\in\Z}\in (X^d)^{\Z}$ $$(\sigma\x)_n=\x_{n+1},\forall n\in\Z.$$

Let $x^{\otimes d}=(x,x,\ldots,x)\in X^d$, 
$$\Delta_{\infty}(X)=\{x^{(\infty)}\triangleq(\ldots,x^{\otimes d},x^{\otimes d},\ldots)\in (X^d)^{\Z}:x\in X\}.$$
For each $x\in X$, put 
$$\omega_x^{\A}\triangleq(T^{\vec{p}(n)}x^{\otimes d})_{n\in\Z}=((T^{p_1(n)}x,T^{p_2(n)}x,\ldots,T^{p_d(n)}x))_{n\in\Z}.$$ and set 
$$N_{\infty}(X,\A)=\overline{\bigcup\{\O(\omega_x^{\A},\sigma):x\in X\}}\subseteq (X^d)^{\Z}.$$

It is clear that $N_{\infty}(X,\A)$ is invariant under the action of $T^{\infty}$ and $\sigma$, and $T^{\infty}\circ\sigma=\sigma\circ T^{\infty}$. Thus $(N_{\infty}(X,\A),\langle T^{\infty},\sigma\rangle)$ is a $\Z^2$-t.d.s.
If $(X,T)$ is transitive, then for each transitive point $x$ of $(X,T)$, $N_{\infty}(X,\A)=\overline{\O(\omega_x^{\A},\langle T^{\infty},\sigma\rangle)}$.

\bigskip

\subsection{Properties related to $N_{\infty}(X,\A)$.}

We begin with some lemmas.

\medskip
\begin{lem}\cite[Lemma 2.2(2)]{HSY}\label{TS}
	Let $(X,\langle T,S\rangle)$ be a minimal system with $T\circ S=S\circ T$, where $T,S:X\rightarrow X$ are homeomorphisms. Then the set of minimal points of $T$ (resp. of $S$) is dense in $X$.
\end{lem}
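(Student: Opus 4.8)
The plan is to prove the density of the set of $T$-minimal points by an invariance argument: I will show that this set is invariant under the whole group $G=\langle T,S\rangle$, so that its closure, being a nonempty closed $G$-invariant subset of $X$, must coincide with $X$ by minimality of $(X,G)$. Write $M:=Min(X,\langle T\rangle)$ for the set of points whose $T$-orbit closure is minimal for the $\Z$-action generated by $T$ alone.

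First I would record that $M\neq\varnothing$. Since $X$ is compact, the poset of nonempty closed $T$-invariant subsets of $X$, ordered by reverse inclusion, satisfies the hypothesis of Zorn's lemma (the intersection of a chain of nonempty closed sets is nonempty by the finite intersection property), so every nonempty closed $T$-invariant set contains a $T$-minimal subset; any point of such a subset lies in $M$.

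The key step, and the heart of the argument, is that $S$ (together with $S^{-1},T,T^{-1}$) carries $T$-minimal sets to $T$-minimal sets, from which $GM=M$ will follow. Since $S$ is a homeomorphism with $S\circ T=T\circ S$, one has $S(\O(x,\langle T\rangle))=\O(Sx,\langle T\rangle)$ and hence $S(\overline{\O(x,\langle T\rangle)})=\overline{\O(Sx,\langle T\rangle)}$ for every $x\in X$. If $Y$ is a $T$-minimal set, then $SY$ is closed and $T$-invariant, because $T(SY)=S(TY)=SY$; moreover $SY$ admits no proper nonempty closed $T$-invariant subset, for the $S$-preimage of such a subset would be a proper nonempty closed $T$-invariant subset of $Y$, contradicting minimality of $Y$. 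Thus $SY$ is again $T$-minimal, so $x\in M$ implies $Sx\in M$, and the same reasoning for $S^{-1},T,T^{-1}$ yields $GM=M$.

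Finally, since each generator $g\in\{T^{\pm1},S^{\pm1}\}$ is a homeomorphism, $g(\overline{M})=\overline{gM}=\overline{M}$, so $\overline{M}$ is a nonempty closed $G$-invariant subset of $X$; minimality of $(X,G)$ forces $\overline{M}=X$, which is exactly the density of the $T$-minimal points. The assertion for $S$ follows verbatim by exchanging the roles of $T$ and $S$. I expect the only genuine obstacle to be the key step above, but commutativity of $T$ and $S$ makes the conjugation-invariance of $T$-minimality immediate, so no additional machinery is required.
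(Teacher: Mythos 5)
Your proof is correct. Note that the paper itself gives no proof of this lemma---it is quoted verbatim from \cite[Lemma 2.2(2)]{HSY}---so there is nothing in the text to compare against; but your argument (nonemptiness of the set of $T$-minimal points via Zorn's lemma, the fact that commutativity makes $S^{\pm 1}$ and $T^{\pm 1}$ permute $T$-minimal sets so that the union $M$ of all $T$-minimal sets satisfies $gM=M$ for every $g\in\langle T,S\rangle$, and then minimality of the $\Z^2$-action forcing $\overline{M}=X$) is the standard proof of this statement and is exactly the kind of reasoning the cited source uses. All the individual steps check out, including the only delicate one: that $SY$ is $T$-minimal whenever $Y$ is, which indeed follows from $T(SY)=S(TY)=SY$ and the pullback argument for proper closed $T$-invariant subsets.
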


\medskip
\begin{lem}\label{fac}
	Let $\pi:(X,T)\rightarrow(Y,S)$ be a factor map. If $(X,T)$ is minimal, so is $(Y,S)$.
\end{lem}
\begin{proof}
	Let $(Y_1,T)$ be a non-empty closed subset of $Y$.
	Since $\pi_{i}^{-1}(Y_1)$ is non-empty, closed and invariant, we have $\pi_{i}^{-1}(Y_1)=X$. Then $Y_1=Y$. i.e., $(Y,S)$ is minimal.
\end{proof}

\medskip
\begin{thm}\label{iff1}
	If $Min(X,T)=\{x\}$, then $Min(N_{\infty}(X,\A),\langle T^{\infty},\sigma\rangle)=\{x^{(\infty)}\}$.
\end{thm}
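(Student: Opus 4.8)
The plan is to prove the two inclusions separately, the first being essentially immediate and the second relying on the commuting-generators lemma.

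First I would observe that the hypothesis $Min(X,T)=\{x\}$ already forces $x$ to be a fixed point: the orbit closure $\overline{\O(x,T)}$ is a minimal set, so all of its points are minimal and hence $\overline{\O(x,T)}\subseteq Min(X,T)=\{x\}$, giving $Tx=x$. Consequently $T^{p_i(n)}x=x$ for every $i$ and $n$, so $\omega_x^{\A}=x^{(\infty)}$ and thus $x^{(\infty)}\in N_{\infty}(X,\A)$. Moreover $T^{\infty}x^{(\infty)}=x^{(\infty)}$ and $\sigma x^{(\infty)}=x^{(\infty)}$, so $x^{(\infty)}$ is a fixed point of the group $\langle T^{\infty},\sigma\rangle$, hence a minimal point; this yields the inclusion $\{x^{(\infty)}\}\subseteq Min(N_{\infty}(X,\A),\langle T^{\infty},\sigma\rangle)$.

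For the reverse inclusion, I would take an arbitrary minimal point $\z$ and set $M=\overline{\O(\z,\langle T^{\infty},\sigma\rangle)}$, a minimal $\Z^2$-subsystem. The difficulty is that the coordinate projections $\pi_n^{(i)}(\mathbf{w})=w_n^{(i)}$ are equivariant for $T^{\infty}$, since $\pi_n^{(i)}\circ T^{\infty}=T\circ\pi_n^{(i)}$, but not for $\sigma$, so one cannot push the minimality of $\z$ for the full group directly down to $(X,T)$. To bridge this gap I would invoke Lemma \ref{TS}: since $(M,\langle T^{\infty},\sigma\rangle)$ is minimal and $T^{\infty}\circ\sigma=\sigma\circ T^{\infty}$, the set of $T^{\infty}$-minimal points is dense in $M$, so I may pick a point $\mathbf{w}\in M$ that is minimal for the single homeomorphism $T^{\infty}$.

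The coordinate projections then do the work. For each $n\in\Z$ and $1\leq i\leq d$, the restriction of $\pi_n^{(i)}$ to $\overline{\O(\mathbf{w},T^{\infty})}$ is a factor map from $(\overline{\O(\mathbf{w},T^{\infty})},T^{\infty})$ onto $(\overline{\O(w_n^{(i)},T)},T)$; since the domain is minimal, Lemma \ref{fac} shows the image is minimal, so $w_n^{(i)}\in Min(X,T)=\{x\}$, i.e. $w_n^{(i)}=x$ for all $n,i$. Hence $\mathbf{w}=x^{(\infty)}$, so $x^{(\infty)}\in M$. As $x^{(\infty)}$ is a fixed point of $\langle T^{\infty},\sigma\rangle$, the singleton $\{x^{(\infty)}\}$ is closed and invariant, and the minimality of $M$ forces $M=\{x^{(\infty)}\}$; therefore $\z=x^{(\infty)}$. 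The main obstacle throughout is precisely the $\sigma$-direction decoupling noted above, and the whole argument hinges on using Lemma \ref{TS} to replace the two-generator minimality by genuine single-transformation ($T^{\infty}$) minimality, after which the projections become equivariant and Lemma \ref{fac} applies.
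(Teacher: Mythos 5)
Your proof is correct and follows essentially the same route as the paper's: invoke Lemma \ref{TS} to find a $T^{\infty}$-minimal point inside a minimal $\Z^2$-subsystem $M$, push it down through the coordinate projections via Lemma \ref{fac} to conclude every coordinate equals $x$, and then collapse $M$ to $\{x^{(\infty)}\}$. The only (harmless) difference is that you make explicit the easy inclusion --- observing that $x$ must be a fixed point, so $\omega_x^{\A}=x^{(\infty)}$ lies in $N_{\infty}(X,\A)$ and is fixed by both generators --- a point the paper leaves implicit.
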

\begin{proof}
	Let $M$ be a minimal subset of $N_{\infty}(X,\A)$, then $(M,\langle T^{\infty},\sigma\rangle)$ is a minimal $\Z^2$-action t.d.s.
	Denote $M_1$ be the set of  all minimal points of $T^{\infty}$ in $M$. By Lemma \ref{TS}, $M_1$ is dense in $M$.
	
	We suppose $(\y_n)_{n\in\Z}:=(y^{(1)}_n,\ldots,y^{(d)}_n)_{n\in\Z}\in M_1$.
	Then $(\ol{\O((\y_n),T^{\infty})},T^{\infty})$ is a minimal system, $\ol{\O((\y_n),T^{\infty})}$ is a closed $T^{\infty}$-invariant subset of $N_{\infty}(X,\A)$.
	
	Let $\pi_{n,i}$ be the factor map defined by
	$$\pi_{n,i}:\big(\ol{\O((\y_n),T^{\infty})},T^{\infty}\big)\rightarrow \big(\ol{\O}(y_n^{(i)},T),T\big), (\z_n)_{n\in\Z}\mapsto z_{n}^{(i)}.$$	
	
	By Lemma \ref{fac}, $(\ol{\O}(y_n^{(i)},T),T)$ is minimal, then we have $y^{(i)}_n=x$ for any $n\in\Z,1\leq i\leq d$. 
	Thus, we have $M=\ol{M_1}=\{x^{(\infty)}\}$. 
	i.e., $x^{(\infty)}$ is the only minimal point of $(N_{\infty}(X,\A),\langle T^{\infty},\sigma\rangle)$.
	
\end{proof}

\medskip
\begin{lem}\label{iff2}
	If $Min(N_{\infty}(X,\A),\langle T^{\infty},\sigma\rangle)=\{((x^{(1)}_n,x_n^{(2)},\ldots,x_n^{(d)}))_{n\in\Z}\}$ is a single point set, then $\big( (x^{(1)}_n,x_n^{(2)},\ldots,x_n^{(d)}) \big)_{n\in\Z}=x^{(\infty)}$ for some $x\in X$. Moreover,  $x$ is the unique minimal point of $(X,T)$.
\end{lem}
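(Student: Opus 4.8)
The plan is to first turn the hypothesis into a statement about a fixed point. Since $w:=((x_n^{(1)},\dots,x_n^{(d)}))_{n\in\Z}$ is by assumption the unique minimal point of $(N_\infty(X,\A),\langle T^\infty,\sigma\rangle)$, its orbit closure $\overline{\O(w,\langle T^\infty,\sigma\rangle)}$ is a minimal set and therefore consists entirely of minimal points; as $Min(N_\infty(X,\A),\langle T^\infty,\sigma\rangle)=\{w\}$, this forces $\overline{\O(w,\langle T^\infty,\sigma\rangle)}=\{w\}$, i.e. $T^\infty w=w$ and $\sigma w=w$. From $\sigma w=w$ I read off that $w$ is constant in $n$, so $x_n^{(i)}=x^{(i)}$ for all $n$, and from $T^\infty w=w$ that the common block $(x^{(1)},\dots,x^{(d)})$ is fixed by $T^{(d)}$, i.e. $Tx^{(i)}=x^{(i)}$ for every $i$.

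Next I would show all the coordinates coincide. The first coordinate $x^{(1)}$ is now a $T$-fixed point, so $T^{p_i(n)}x^{(1)}=x^{(1)}$ for all $i,n$. Hence $\omega_{x^{(1)}}^\A=(T^{\vec p(n)}(x^{(1)})^{\otimes d})_{n\in\Z}$ is the constant sequence $(x^{(1)})^{(\infty)}$, a point of $N_\infty(X,\A)$ that is again fixed by both $\sigma$ and $T^\infty$, hence a minimal point. By the uniqueness hypothesis $(x^{(1)})^{(\infty)}=w$, and comparing the $d$ block-coordinates gives $x^{(2)}=\dots=x^{(d)}=x^{(1)}$. Setting $x=x^{(1)}$ yields $w=x^{(\infty)}$ with $x\in Fix(X,T)$, which is the first assertion.

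For the final assertion I would prove that $x$ is the only minimal point of $(X,T)$ by a subsystem argument. Let $z\in Min(X,T)$ be arbitrary and set $Y=\overline{\O(z,T)}$, a minimal subsystem of $(X,T)$. Since $Y$ is closed and $T$-invariant, $N_\infty(Y,\A)\subseteq N_\infty(X,\A)$ is a nonempty closed $\langle T^\infty,\sigma\rangle$-invariant subsystem, so as a compact system it contains a minimal point $v$; because $N_\infty(Y,\A)$ is closed and invariant, $v$ is also a minimal point of the ambient system, whence $v=w=x^{(\infty)}$. But $w\in N_\infty(Y,\A)\subseteq (Y^d)^\Z$ forces $x\in Y$, and since $x$ is $T$-fixed, $\{x\}$ is a closed invariant subset of the minimal system $Y$; minimality gives $Y=\{x\}$ and therefore $z=x$. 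Thus $Min(X,T)=\{x\}$.

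The individual steps are largely routine; the two places that need care are the passage from ``single minimal point'' to ``fixed point'' in the first paragraph and, in the last paragraph, the verification that a minimal point of the subsystem $N_\infty(Y,\A)$ really is a minimal point of $N_\infty(X,\A)$, so that the uniqueness hypothesis can be invoked. I expect the conceptual crux to be the observation in the second paragraph that a $T$-fixed coordinate produces the constant, group-fixed point $(x^{(1)})^{(\infty)}\in N_\infty(X,\A)$, which the hypothesis then pins to $w$ and collapses the diagonal.
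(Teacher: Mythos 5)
Your proof is correct, and while its first two steps coincide with the paper's, the final step takes a genuinely different route. Forcing $\sigma w=w$ and $T^{\infty}w=w$ from uniqueness of the minimal point, and then collapsing the coordinates by observing that a $T$-fixed coordinate produces the group-fixed (hence minimal) point $(x^{(1)})^{(\infty)}=\omega_{x^{(1)}}^{\A}\in N_{\infty}(X,\A)$, is exactly what the paper does. The divergence is in proving $Min(X,T)=\{x\}$: the paper disposes of this in one line by asserting that $y\in Min(X,T)$ implies $y^{(\infty)}\in Min(N_{\infty}(X,\A),\langle T^{\infty},\sigma\rangle)$, and this assertion is not elementary --- one needs to know both that $y^{(\infty)}$ lies in $N_{\infty}(X,\A)$ at all and that it is minimal for the induced $\Z^2$-action, a lifting result that rests on the polynomial recurrence machinery of \cite{HSY}. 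Your subsystem argument bypasses this entirely: you pass to $Y=\overline{\O(z,T)}$ for an arbitrary minimal point $z$, note that $N_{\infty}(Y,\A)$ is a nonempty closed $\langle T^{\infty},\sigma\rangle$-invariant subset of $N_{\infty}(X,\A)$ contained in $(Y^d)^{\Z}$, extract a minimal point from it by compactness (automatically minimal in the ambient system), identify that point with $x^{(\infty)}$ via the uniqueness hypothesis, and conclude $x\in Y$, whence $Y=\{x\}$ and $z=x$ since $x$ is fixed and $Y$ is minimal. What your route buys is self-containedness --- only standard facts about existence and heredity of minimal sets are used, with no appeal to the deeper lifting theorem --- at the cost of a slightly longer argument; the paper's route is shorter but only modulo the result it implicitly imports from \cite{HSY}.
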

\begin{proof}
	Since $\big((x^{(1)}_n,x_n^{(2)},\ldots,x_n^{(d)})\big)_{n\in\Z}$ is the only minimal point, we have $$\sigma\big((x^{(1)}_n,x_n^{(2)},\ldots,x_n^{(d)})\big)_{n\in\Z}=\big((x^{(1)}_n,x_n^{(2)},\ldots,x_n^{(d)})\big)_{n\in\Z},$$
	$$T^{(\infty)}\big((x^{(1)}_n,x_n^{(2)},\ldots,x_n^{(d)})\big)_{n\in\Z}=\big((x^{(1)}_n,x_n^{(2)},\ldots,x_n^{(d)})\big)_{n\in\Z}.$$ 
	Then for any $m,n\in\Z$, we have  $x_m^{(i)}=x_n^{(i)}:=x_i$ for some $x_i\in Fix(X,T)$. It is clear that $$x_i^{(\infty)},x_j^{(\infty)}\in Fix(N_{\infty}(X,\A),\langle T^{\infty},\sigma\rangle)\subset Min(N_{\infty}(X,\A),\langle T^{\infty},\sigma\rangle).$$
	So $x_i=x_j:=x$ for any $1\leq i,j\leq d$.
	
	If $y\in Min(X,T)$, then $y^{(\infty)}\in Min(N_{\infty}(X,\A),\langle T^{\infty},\sigma\rangle)=\{x^{(\infty)}\}$ which implies that $x=y$. Hence, $Min(X,T)=\{x\}$.
\end{proof}

\medskip
\begin{thm}\label{min-N}
	Let $(X,T)$ be a t.d.s. and $d\in\N$. Let $\A=\{p_1,\ldots,p_d\}$ be an family of integral polynomials with $p_i(0)=0,1\leq i\leq d$. Then $(X,T)$ has only one minimal point if and only if so is $(N_{\infty}(X,\A),\langle T^{\infty},\sigma\rangle)$.
\end{thm}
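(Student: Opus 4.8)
The plan is to prove Theorem~\ref{min-N} as a direct combination of the two preceding results, Theorem~\ref{iff1} and Lemma~\ref{iff2}, which together establish the two implications of the biconditional. Since the theorem asserts that $(X,T)$ has a unique minimal point if and only if $(N_{\infty}(X,\A),\langle T^{\infty},\sigma\rangle)$ does, I would simply verify that these two auxiliary statements exactly match the two directions of the equivalence and cite them accordingly.

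For the forward direction, suppose $Min(X,T)=\{x\}$ is a single point. Theorem~\ref{iff1} gives precisely that $Min(N_{\infty}(X,\A),\langle T^{\infty},\sigma\rangle)=\{x^{(\infty)}\}$, so the induced system also has a unique minimal point, namely the constant sequence $x^{(\infty)}$ built from $x$. For the reverse direction, suppose $(N_{\infty}(X,\A),\langle T^{\infty},\sigma\rangle)$ has a unique minimal point. Lemma~\ref{iff2} shows that this minimal point must be of the form $x^{(\infty)}$ for some $x\in X$ and, moreover, that $x$ is the unique minimal point of $(X,T)$, i.e.\ $Min(X,T)=\{x\}$. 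Combining the two directions yields the stated equivalence.

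Since both halves are already packaged in the cited results, no genuinely new argument is needed here; the theorem is essentially a restatement that bundles Theorem~\ref{iff1} and Lemma~\ref{iff2} into a single biconditional. The only point requiring minor care is confirming that the unique minimal point of the induced system, which Lemma~\ref{iff2} identifies abstractly as a fixed point $x^{(\infty)}$, coincides with the $x^{(\infty)}$ produced in Theorem~\ref{iff1} — but this is automatic, since uniqueness of the minimal point on each side forces the two descriptions to agree. I therefore do not anticipate any substantive obstacle; the work lies entirely in the two lemmas already proved above.

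The proof itself would read simply: \emph{This follows immediately from Theorem~\ref{iff1} and Lemma~\ref{iff2}.}
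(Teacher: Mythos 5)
Your proposal is correct and matches the paper's own proof exactly: the paper's proof of Theorem~\ref{min-N} is precisely ``It follows from Lemmas \ref{iff1} and \ref{iff2},'' with Theorem~\ref{iff1} giving the forward implication and Lemma~\ref{iff2} the converse. Your additional remark that the two descriptions of the unique minimal point automatically agree is a harmless (and accurate) elaboration, not a deviation.
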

\begin{proof}
	It follows from Lemmas \ref{iff1} and \ref{iff2}. 
\end{proof}

\medskip
\begin{rem}
		In Theorem \ref{min-N}, the action $T^{\infty}$ on $N_{\infty}(X,\A)$ is necessary.
		In general, $|Min(N_{\infty}(X,\A),\sigma)|$ does not always be 1 when $|Min(X,T)|=1$, see Example \ref{tildesigma}. 
\end{rem}

\medskip
\begin{exam}\label{tildesigma}
	Suppose that $S=\{n^2:n\in\Z\}$. Let $x\in\{0,1\}^{\Z}$ be defined by $$x(n)=1\Longleftrightarrow n\in S.$$
	Set $X=\ol{\{T^{n}x:n\in\Z\}}$ where $T$ is the left shift. Then $(X,T)$ is a subshift. 
	The point $y:={\bf 0}=\cdots000\cdots$ is the only minimal point of $(X,T)$, then $|Min(X,T)|=1$. 
	
	Let $\A=\{p_1(n)=n^2\}$. Since $x$ is a transitive point of $X$, we have
	$N_{\infty}(X,\A)=\overline{\O(\omega_x^{\A},\langle T^{\infty},\sigma\rangle)}$.
	We claim that $|Min(N_{\infty}(X,\A),\sigma)|\geq 2$.
	Note that	
	$$\omega_{x}^{\A}=(\ldots,T^{4}x,Tx,\underset{\bullet}{x},Tx,T^{4}x,\ldots)=(T^{n^2}x)_{n\in\Z},$$
	$$\sigma^{k}\omega_{x}^{\A}=(\ldots,T^{(k+2)^2}x,T^{(k+1)^2}x,\underset{\bullet}{T^{k^2}x},T^{(k-1)^2}x,\ldots)$$
	where ``$\underset{\bullet}{ }$'' means the $0$-th coordinate.
	
	Note $z:=1_{\{0\}}=\cdots0\underset{\bullet}{1}0\cdots$. For $k\in\N$, we have
	$$T^{k^2}x=\cdots 0^{(2k-2)}\underset{\bullet}{1}0^{(2k)}\cdots\ \text{and} \ \lim_{k\rightarrow+\infty}T^{k^2}x=z\in X.$$
	Then $$\lim_{k\rightarrow+\infty}\sigma^{k}\omega^{\A}_{x}= z^{(\infty)}\in N_{\infty}(X,\A) \ \text{and} \ \sigma(z^{(\infty)})=z^{(\infty)}.$$
	i.e., $z^{(\infty)}\in Fix(N_{\infty}(X,\A),\sigma)\subset Min(N_{\infty}(X,\A),\sigma)$.
	It is clear that	
	$$y^{(\infty)}=(\ldots,{\bf 0},{\bf 0},{\bf 0},\ldots)\in Min(N_{\infty}(X,\A),\sigma).$$
	
	Thus, $|Min(N_{\infty}(X,\A),\sigma)|\geq 2$.
\end{exam}

\medskip

\subsection{The case for linear polynomials.}
Let $\A=\{p_1,\ldots,p_d\}$ where $p_i(n)=a_{i}n,1\leq i\leq d$ and $a_1,\ldots,a_d$ are distinct non-zero integers. 
In this case we recall that for any t.d.s. $(X,T)$, $$(N_{\infty}(X,\A),\langle T^{\infty},\sigma\rangle)\cong (N_{\A}(X,T),\langle T^{(d)},\tau_{\vec{a}}\rangle)$$
where $N_{\A}(X,T)=\ol{\O}(\Delta_d(X),\tau_{\vec{a}})$ and $\tau_{\vec{a}}=T^{a_1}\times\cdots\times T^{a_d}$ with $\vec{a}=(a_1,a_2,\ldots,a_d)$.

As $\A=\{p_1,\ldots,p_d\}$, we have that for each $x\in X$
$$\omega_x^{\A}=\big((T^{a_{1}n},T^{a_{2}n},\ldots,T^{a_{d}n})\big)_{n\in\Z}=(\ldots,\tau_{\vec{a}}^{-1}x^{\otimes d},\underset{\bullet}{x^{\otimes d}},\tau_{\vec{a}}x^{\otimes d},\ldots)\in(X^d)^{\Z}.$$
Note that 
\begin{align*}	\sigma\omega_x^{\A}&=\big((T^{a_{1}(n+1)},T^{a_{2}(n+1)},\ldots,T^{a_{d}(n+1)})\big)_{n\in\Z}
\\	&=\big(\tau_{\vec{a}}(T^{a_{1}n},T^{a_{2}n},\ldots,T^{a_{d}n})\big)=\tau_{\vec{a}}^{\infty}\omega_x^{\A},
\end{align*}
where $\tau_{\vec{a}}^{\infty}=\cdots\times\tau_{\vec{a}}\times\tau_{\vec{a}}\times\cdots$, and for all $n\in\Z$
\begin{align*}
	\sigma^{n}\omega_{x}^{\A}&=(\ldots,\tau_{\vec{a}}^{n}\tau_{\vec{a}}^{-1}x^{\otimes d},\underset{\bullet}{\tau_{\vec{a}}^{n}x^{\otimes d}},\tau_{\vec{a}}^{n}\tau_{\vec{a}}x^{\otimes d},\ldots)\\
	&=\tilde{\tau}_{\vec{a}}(\ldots,\tau_{\vec{a}}^{n}x^{\otimes d},\underset{\bullet}{\tau_{\vec{a}}^{n}x^{\otimes d}},\tau_{\vec{a}}^{n}x^{\otimes d},\ldots),
\end{align*}
where $\tilde{\tau}_{\vec{a}}=\cdots\times\tau_{\vec{a}}^{-1}\times\underset{\bullet}{id_{X^d}}\times \tau_{\vec{a}}\times \tau_{\vec{a}}^{2}\times\cdots$. Thus, we have $$N_{\infty}(X,\A)=\{(\ldots,\tau_{\vec{a}}^{-1}\y,\underset{\bullet}{\y},\tau_{\vec{a}}\y,\ldots):\y\in N_{\A}(X,T)\}.$$
We define
$$\phi:(N_{\A}(X,T),\tau_{\vec{a}})\rightarrow(N_{\infty}(X,\A),\sigma),\y\mapsto(\ldots,\tau_{\vec{a}}^{-1}\y,\underset{\bullet}{\y},\tau_{\vec{a}}\y,\ldots),$$ and it is an isomorphism. Thus $(N_{\infty}(X,\A),\sigma)\cong(N_{\A}(X,T),\tau_{\vec{a}})$, and
$$(N_{\infty}(X,\A),\langle T^{\infty},\sigma\rangle)\cong (N_{\A}(X,T),\langle T^{(d)},\tau_{\vec{a}}\rangle).$$

\medskip

In this case, we have the following propery about recurrent points.

\begin{thm}\label{rec-N}
	$|Rec(X,T)|=1$ if and only if $|Rec(N_{\A}(X,T),\langle T^{(d)},\tau_{\vec{a}}\rangle)|=1$.
\end{thm}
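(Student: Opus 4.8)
The plan is to prove both implications through the diagonal embedding $x\mapsto x^{\otimes d}$, which serves as a bridge between the recurrent points of the two systems. First I would record a normalization valid on both sides: since $\langle T^{(d)},\tau_{\vec{a}}\rangle$ is abelian, the recurrent set of each system is invariant under the action, and because every compact system has a minimal point together with the inclusions $Fix\subset Min\subset Rec$, neither recurrent set is empty. Hence if either recurrent set is a singleton, its unique point is necessarily a fixed point. In particular, when $Rec(X,T)=\{x_0\}$ the point $x_0$ is fixed, and when the recurrent set of $N_{\A}(X,T)$ is a single point it is fixed under both $T^{(d)}$ and $\tau_{\vec{a}}$.

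For the implication $\Leftarrow$, suppose $|Rec(N_{\A}(X,T),\langle T^{(d)},\tau_{\vec{a}}\rangle)|=1$. If $x\in Rec(X,T)$, pick $n_k\to\infty$ with $T^{n_k}x\to x$; then $(T^{(d)})^{n_k}x^{\otimes d}=(T^{n_k}x)^{\otimes d}\to x^{\otimes d}$, so $x^{\otimes d}$ is recurrent already for $\langle T^{(d)}\rangle$, hence for $\langle T^{(d)},\tau_{\vec{a}}\rangle$. Thus $x\mapsto x^{\otimes d}$ embeds $Rec(X,T)$ into the one-point set $Rec(N_{\A}(X,T),\langle T^{(d)},\tau_{\vec{a}}\rangle)$; since this map is injective, $|Rec(X,T)|\le 1$, and therefore $=1$.

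For the implication $\Rightarrow$, assume $Rec(X,T)=\{x_0\}$ with $x_0$ fixed. The computation above shows $x_0^{\otimes d}$ is recurrent, so it suffices to prove that an arbitrary recurrent point $\y=(y_1,\ldots,y_d)\in N_{\A}(X,T)$ equals $x_0^{\otimes d}$. I would write the infinitely many distinct elements witnessing the recurrence of $\y$ as $(T^{(d)})^{m_k}\tau_{\vec{a}}^{n_k}$, so that $T^{m_k+a_in_k}y_i\to y_i$ for every $i$, and note that distinctness forces the exponents $(m_k,n_k)$ to be unbounded (a bounded set of exponents yields only finitely many elements). Passing to a subsequence, for each $i$ the integer $l_k^{(i)}:=m_k+a_in_k$ is either eventually constant or satisfies $|l_k^{(i)}|\to\infty$. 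If $|l_k^{(i)}|\to\infty$, then $T^{l_k^{(i)}}y_i\to y_i$ exhibits $y_i$ as a recurrent point of $(X,T)$, so $y_i=x_0$; if $l_k^{(i)}$ is a nonzero constant, then $y_i$ is periodic, again recurrent, so $y_i=x_0$. Since $l_k^{(i)}-l_k^{(j)}=(a_i-a_j)n_k$ and the $a_i$ are distinct, at most one index $i^*$ can keep $l_k^{(i^*)}$ constant; if that constant were nonzero the periodicity argument would again give $y_{i^*}=x_0$, so the only case left to treat is $l_k^{(i^*)}\equiv 0$, with all remaining coordinates already equal to $x_0$.

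The step I expect to be the main obstacle is pinning down the last coordinate $y_{i^*}$, precisely because the witnessing elements act trivially on coordinate $i^*$ and so carry no recurrence information there. The way I would overcome it is to use that, once every coordinate $j\ne i^*$ equals the fixed point $x_0$, an element $(T^{(d)})^m\tau_{\vec{a}}^n$ moves $\y$ if and only if it moves its $i^*$-th coordinate. Genuine recurrence of $\y$ supplies elements $g_k$ with $g_k\y\to\y$ and $g_k\y\ne\y$; for these the exponent $l_k^{(i^*)}$ satisfies $T^{l_k^{(i^*)}}y_{i^*}\to y_{i^*}$ while $T^{l_k^{(i^*)}}y_{i^*}\ne y_{i^*}$, which rules out a bounded subsequence and forces $|l_k^{(i^*)}|\to\infty$, exhibiting $y_{i^*}$ as recurrent; hence $y_{i^*}=x_0$. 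A complementary check comes from $\y\in\ol{\O(\Delta_d(X),\tau_{\vec{a}})}$: writing $\y=\lim_l(T^{a_1k_l}w_l,\ldots,T^{a_dk_l}w_l)$ and $v_l:=T^{a_{i^*}k_l}w_l\to y_{i^*}$, one gets $T^{(a_j-a_{i^*})k_l}v_l\to x_0$ for $j\ne i^*$, and the fixedness of $x_0$ closes the argument when the shifts $k_l$ stay bounded; controlling the unbounded case is exactly where the genuine-return argument above is needed. Either way $\y=x_0^{\otimes d}$, giving $|Rec(N_{\A}(X,T),\langle T^{(d)},\tau_{\vec{a}}\rangle)|=1$.
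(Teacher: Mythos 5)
Your proposal follows the same skeleton as the paper's proof: the direction $\Leftarrow$ via the diagonal embedding (the paper dismisses this as ``obvious''), and the direction $\Rightarrow$ by extracting a witnessing sequence $(m_k,n_k)$ and analysing the exponents $l_k^{(i)}=m_k+a_in_k$ coordinate by coordinate. Up to the point where you isolate the single possible degenerate index $i^*$ with $l_k^{(i^*)}\equiv 0$ and all other coordinates equal to the fixed point $x_0$, your argument is correct, and it is strictly more careful than the published proof, which simply asserts ``then $y_j\in Rec(X,T)$'' without noticing that the exponents attached to one coordinate may carry no recurrence information at all.

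However, the step you use to close the degenerate case is false, and the gap it leaves cannot be repaired. You claim that genuine recurrence of $\y$ supplies elements $g_k$ with $g_k\y\to\y$ and $g_k\y\ne\y$. Under the paper's definition of recurrence (for every neighborhood $U$ of $\y$ the set $N_G(\y,U)$ is infinite), this is not true: a point with infinite stabilizer is recurrent even if no group element moves it near itself, and in your remaining case the stabilizer is automatically infinite, because every coordinate $j\ne i^*$ equals the fixed point $x_0$, so every pair $(m,n)$ with $m+a_{i^*}n=0$ fixes $\y$ outright. Thus recurrence of $\y$ carries no information whatsoever about $y_{i^*}$, and your fallback via $\y\in\ol{\O}(\Delta_d(X),\tau_{\vec{a}})$ breaks down in exactly the unbounded case you flagged. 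In fact, with the paper's stated definitions the theorem itself fails. Take $X=\{a_k:k\in\Z\}\cup\{a_\infty\}$ with $Ta_k=a_{k+1}$, $Ta_\infty=a_\infty$ and $a_k\to a_\infty$ as $|k|\to\infty$ (the subsystem $(A,T|_A)$ of the paper's Example \ref{recexa}), so that $Rec(X,T)=\{a_\infty\}$; let $\A=\{n,2n\}$ and $\tau=T\times T^2$. Then
\begin{equation*}
(a_\infty,a_0)=\lim_{k\to\infty}\tau^{k}(a_{-2k},a_{-2k})\in N_{\A}(X,T),
\end{equation*}
and $(T^{(2)})^m\tau^n(a_\infty,a_0)=(a_\infty,T^{m+2n}a_0)=(a_\infty,a_0)$ whenever $m+2n=0$, so $(a_\infty,a_0)$ is recurrent for $\langle T^{(2)},\tau\rangle$ by the infinite-stabilizer observation above; since it differs from the fixed point $(a_\infty,a_\infty)$, we get $|Rec(N_\A(X,T),\langle T^{(2)},\tau\rangle)|\geq 2$ while $|Rec(X,T)|=1$. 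The paper's own proof commits the same unjustified step, so your attempt is no worse than the published argument --- indeed you located the precise difficulty --- but neither constitutes a proof: the statement only becomes true under a stronger notion of recurrence that discounts stabilizers, and such a notion would conflict both with the inclusion $Fix\subset Min\subset Rec$ asserted in Section 2 and with the paper's use of the weak notion in Example \ref{recexa}, where $a_0^{(\infty)}$ is declared recurrent for the joint action merely because it is fixed under $\sigma$.
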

\begin{proof}
	Necessity is obvious.
	
	Now assume that $Rec(X,T)=\{x\}$. If $(y_1,\ldots,y_d)\in Rec(N_{\A}(X,T),\langle T^{(d)},\tau_{\vec{a}}\rangle)$, then there exist $m_i,n_i$ such that $$\lim_{i\rightarrow\infty}(\tau_{\vec{a}})^{m_i}(T^{(d)})^{n_i}(y_1,\ldots,y_d)=(y_1,\ldots,y_d).$$
	i.e., $\lim_{i\rightarrow\infty}T^{m_{i}a_j+n_i}y_j=y_j$ for any $1\leq j\leq d$. Then $y_i\in Rec(X,T)=\{x\}$. 
	
	Thus, $(y_1,\ldots,y_d)=x^{(d)}$, we have $|Rec(N_{\A}(X,T),\langle T^{(d)},\tau_{\vec{a}}\rangle)|=1$.
	
\end{proof}

\medskip
But in gereral, Theorem \ref{rec-N} does not hold for the case of non-linear polynamials. 
There exists a system $(X,T)$ such that $|Rec(X,T)|=1$ but $|Rec(N_{\infty}(X,\A),\langle T^{\infty},\tilde{\sigma}\rangle)|\geq 2$ for some $\A$, see Example \ref{recexa}.

\medskip
\begin{exam}\label{recexa}
We construct the original system $(X,T)$ at first.

The first part of our system $(X,T)$ in $\R^2$ is a countable subset $A$ of the unit circle $\S^1$.
Let $a_i,i\in\Z$ and $a_{\infty}$ be different points in $\S^1$, with $\lim_{i\rightarrow\infty}a_{i}=\lim_{i\rightarrow\infty}a_{-i}=a_{\infty}$, see Figure 1 and put $A=\{a_i:i\in\Z\}\cup\{a_{\infty}\}$. Define the restriction of $T$ to the set $A$ by putting $T(a_i)=a_{i+1}$ and $T(a_{\infty})=a_{\infty}$.

\begin{center}
\begin{tikzpicture}
\coordinate (ain) at (2.5,0);
\coordinate (a0) at (2.5,5);
\coordinate (o) at (2.5,2.5);
\coordinate (d1) at ($ (o)!1!30:(a0) $);
\coordinate (d2) at ($ (o)!1!60:(a0) $);
\coordinate (d3) at ($ (o)!1!90:(a0) $);
\coordinate (d4) at ($ (o)!1!120:(a0) $);
\coordinate (d5) at ($ (o)!1!150:(a0) $);
\coordinate (a5) at ($ (o)!1!30:(ain) $);
\coordinate (a4) at ($ (o)!1!60:(ain) $);
\coordinate (a3) at ($ (o)!1!90:(ain) $);
\coordinate (a2) at ($ (o)!1!120:(ain) $);
\coordinate (a1) at ($ (o)!1!150:(ain) $);
\fill (a0) node [above left] {$a_0$} circle (1.5pt)
(d1) node [above left] {$a_{-1}$} circle (1.5pt)
(d2) node [above left] {$a_{-2}$} circle (1.5pt)
(d3) node [above left] {$a_{-3}$} circle (1.5pt)
(d4) node [below left] {$a_{-4}$} circle (1.5pt)
(d5) node [below left] {$a_{-5}$} circle (1.5pt)
(a5) node [below right] {$a_{5}$} circle (1.5pt)
(a4) node [below right] {$a_{4}$} circle (1.5pt)
(a3) node [above right] {$a_{3}$} circle (1.5pt)
(a2) node [above right] {$a_{2}$} circle (1.5pt)
(a1) node [above right] {$a_{1}$} circle (1.5pt)
(ain) node [below right] {$a_{\infty}$} circle (1.5pt);


\draw [shorten <=1mm][shorten >=1mm][-{Stealth[]}](a0) arc (90:60:2.5cm);
\draw [shorten <=1mm][shorten >=1mm][-{Stealth[]}](a1) arc (60:30:2.5cm);
\draw [shorten <=1mm][shorten >=1mm][-{Stealth[]}](a2) arc (30:0:2.5cm);
\draw [shorten <=1mm][shorten >=1mm][-{Stealth[]}](a3) arc (360:330:2.5cm);
\draw [shorten <=1mm][shorten >=1mm][-{Stealth[]}](a4) arc (330:300:2.5cm);
\draw [shorten <=1mm][shorten >=1mm][-{Stealth[]}](d5) arc (240:210:2.5cm);
\draw [shorten <=1mm][shorten >=1mm][-{Stealth[]}](d4) arc (210:180:2.5cm);
\draw [shorten <=1mm][shorten >=1mm][-{Stealth[]}](d3) arc (180:150:2.5cm);
\draw [shorten <=1mm][shorten >=1mm] [-{Stealth[]}](d2) arc (150:120:2.5cm);
\draw [shorten <=1mm][shorten >=1mm] [-{Stealth[]}](d1) arc (120:90:2.5cm);
\draw [
line width=1pt,
dash pattern=on 0.0001pt off 4pt,
line cap=round] (d5) arc (240:250:2.5cm);
\draw [
line width=1pt,
dash pattern=on 0.0001pt off 3pt,
line cap=round] (ain) arc (270:253:2.5cm);
\draw [
line width=1pt,
dash pattern=on 0.0001pt off 4pt,
line cap=round] (a5) arc (300:290:2.5cm);
\draw [
line width=1pt,
dash pattern=on 0.0001pt off 3pt,
line cap=round] (ain) arc (270:287:2.5cm);
\end{tikzpicture}
\end{center}
\begin{center}
	Figure 1. The system $(A,T|_{A})$
\end{center}

\medskip
The second part of $(X,T)$ is formed by just one trajectory $\{x_i\}_{i\in\Z_+}$ lying in $\R^2$, with $T(x_i)=x_{i+1}$, approaching the set $A$. 
The trajectory will be chosen such that $$d(x_n,A):=\min\{d(x_n,y):y\in A\}\searrow 0,n\rightarrow\infty$$ where $d$ is the Euclidean metric.

Fix a family of pairwise disjoint open sets $U(a_n)$ containing $a_n$.
Now we are going to describe the trajectory in more details. For any $n\in\Z_+$, we take $$x_{n^2}\in U(a_0),x_{n^2+1}\in U(a_1),\ldots,x_{n^2+n}\in U(a_{n}),$$ $$x_{n^2+n+1}\in U(a_{-n}),x_{n^2+n+2}\in U(a_{-n+1}),\ldots,x_{n^2+2n}\in U(a_{-1}),x_{(n+1)^2}\in U(a_{0}).$$
 We provide also pictures, see Figure 2 and Figure 3.

\begin{center}
	\begin{tikzpicture}
		\coordinate (ain) at (2.5,0);
		\coordinate (a0) at (2.5,5);
		\coordinate (o) at (2.5,2.5);
		\coordinate (d1) at ($ (o)!1!40:(a0) $);
		\coordinate (d2) at ($ (o)!1!80:(a0) $);
		\coordinate (a2) at ($ (o)!1!100:(ain) $);
		\coordinate (a1) at ($ (o)!1!140:(ain) $);
\coordinate (x0) at (2.5,4.6);
\coordinate (x4) at (2.5,5.3);

\coordinate (d3) at ($ (o)!1!115:(a0) $);
\coordinate (d4) at ($ (o)!1!130:(a0) $);
\coordinate (d5) at ($ (o)!1!150:(a0) $);
\coordinate (a5) at ($ (o)!1!30:(ain) $);
\coordinate (a4) at ($ (o)!1!50:(ain) $);
\coordinate (a3) at ($ (o)!1!65:(ain) $);
\coordinate (x5) at (4.3,4.7);
\coordinate (x6) at (5.2,2.9);
\coordinate (x7) at (-.2,3);
\coordinate (x8) at (0.7,4.7);
\fill (x8) node [above left] {$x_8$} circle (1.5pt) (x7) 
node [below right] {$x_7$} circle (1.5pt) (x6) 
node [below left] {$x_6$} circle (1.5pt) (x5) 
node [above right] {$x_5$} circle (1.5pt) (x0) 
node [left] {$x_0$} circle (1.5pt) (a0) node [right] {$x_1$} circle (1.5pt) (a1) node [below] {$x_2$} circle (1.5pt) (d1) node [below] {$x_3$} circle (1.5pt) (x4) node [above] {$x_4$} circle (1.5pt) 
(d5) node [below left] {$a_{-4}$} circle (1.5pt)
(a5) node [below right] {$a_{4}$} circle (1.5pt)
;
\draw [shorten <=0.5mm][shorten >=0.5mm][->](x0)--(a0);
\draw [shorten <=1mm][shorten >=1mm][->](a0)--(a1);
\draw [shorten <=1mm][shorten >=1mm][->](a1)--(d1);
\draw [shorten <=1mm][shorten >=1mm][->](d1)--(x4);
\draw [shorten <=1mm][shorten >=1mm][->](x4)--(x5);
\draw [shorten <=1mm][shorten >=1mm][->](x5)--(x6);
\draw [shorten <=1mm][shorten >=1mm][->](x6)--(x7);
\draw [shorten <=1mm][shorten >=1mm][->](x7)--(x8);

\draw (a3) circle [radius=0.6];
\node at (5.7,.8) {$U(a_{3})$};
\draw (d3) circle [radius=0.6];
\node at (-0.7,.8) {$U(a_{-3})$};
\draw (a0) circle [radius=0.6];
\node at (3.7,5.8) {$U(a_{0})$};
\draw (a1) circle [radius=0.6];
\node at (5.7,4.7) {$U(a_{1})$};
\draw (a2) circle [radius=0.6];
\node at (6.3,2.7) {$U(a_{2})$};
\draw (d1) circle [radius=0.6];
\node at (-0.7,4.8) {$U(a_{-1})$};
\draw (d2) circle [radius=0.6];
\node at (-1.4,2.7) {$U(a_{-2})$};

\fill (ain) node [below right] {$a_{\infty}$} circle (1.5pt);

\draw [
line width=1pt,
dash pattern=on 0.0001pt off 4pt,
line cap=round] (d5) arc (240:250:2.5cm);
\draw [
line width=1pt,
dash pattern=on 0.0001pt off 3pt,
line cap=round] (ain) arc (270:253:2.5cm);
\draw [
line width=1pt,
dash pattern=on 0.0001pt off 4pt,
line cap=round] (a5) arc (300:290:2.5cm);
\draw [
line width=1pt,
dash pattern=on 0.0001pt off 3pt,
line cap=round] (ain) arc (270:287:2.5cm);

	\end{tikzpicture}
\end{center}
\begin{center}
	Figure 2. $\{x_0,x_1,\ldots,x_8\}$
\end{center}
\medskip

\begin{center}
	\begin{tikzpicture}
		\coordinate (ain) at (2.5,0);
		\coordinate (a0) at (2.5,5);
		\coordinate (o) at (2.5,2.5);
		\coordinate (d1) at ($ (o)!1!30:(a0) $);
		\coordinate (d2) at ($ (o)!1!60:(a0) $);
		\coordinate (d3) at ($ (o)!1!90:(a0) $);
		\coordinate (d4) at ($ (o)!1!120:(a0) $);
		\coordinate (d5) at ($ (o)!1!130:(a0) $);
		\coordinate (a5) at ($ (o)!1!230:(a0) $);
		\coordinate (a4) at ($ (o)!1!240:(a0) $);
		\coordinate (a3) at ($ (o)!1!270:(a0) $);
		\coordinate (a2) at ($ (o)!1!300:(a0) $);
		\coordinate (a1) at ($ (o)!1!330:(a0) $);
		
		\coordinate (x-1) at ($ (o)!0.9!0:(d1) $);
		\coordinate (x-2) at ($ (o)!0.9!0:(d2) $);
		\coordinate (x-3) at ($ (o)!0.9!0:(d3) $);
		\coordinate (x-4) at ($ (o)!0.9!0:(d4) $);
		\coordinate (x-5) at ($ (o)!0.9!0:(d5) $);
		\coordinate (x5) at ($ (o)!0.9!0:(a5) $);
		\coordinate (x4) at ($ (o)!0.9!0:(a4) $);
		\coordinate (x3) at ($ (o)!0.9!0:(a3) $);
		\coordinate (x2) at ($ (o)!0.9!0:(a2) $);
		\coordinate (x1) at ($ (o)!0.9!0:(a1) $);
		\coordinate (x0) at ($ (o)!0.9!0:(a0) $);

		\draw (a0) circle [radius=0.5];
		\node at (3.2,5.8) {$U(a_{0})$};
		\draw (a1) circle [radius=0.5];
		\node at (5,4.9) {$U(a_{1})$};
		\draw (a2) circle [radius=0.5];
		\node at (6,3.7) {$U(a_{2})$};
		\draw (a3) circle [radius=0.5];
		\node at (6.3,2.5) {$U(a_{3})$};
		\node at (5.9,1) {$U(a_{n})$};
		\draw (d1) circle [radius=0.5];
		\node at (0.2,5.1) {$U(a_{-1})$};
		\draw (d2) circle [radius=0.5];
		\node at (-1,3.6) {$U(a_{-2})$};
		\draw (d3) circle [radius=0.5]; \node at (-1.1,2.2) {$U(a_{-3})$};
		\node at (-0.9,1) {$U(a_{-n})$};
		\draw (a5) circle [radius=0.5];
		\draw (d5) circle [radius=0.5];
		\fill (a0) node [above] {$x_{n^2}$} circle (1.5pt);
		\fill (ain) node [below] {$a_{\infty}$} circle (1.2pt);
		\fill (a5) circle (1pt);
		\fill (d5) circle (1pt);
		\fill (x5) node [above left] {$x_{n^2+n}$} circle (1.5pt) 
		(x3)  circle (1.5pt) 
		(x2)  circle (1.5pt) 
		(x1)  circle (1.5pt) 
		(x-5) node [above right] {$x_{n^2+n+1}$} circle (1.5pt) 
		(x-3) circle (1.5pt) 
		(x-2) circle (1.5pt) 
		(x-1) circle (1.5pt) 
		(x0) node [below] {$x_{(n+1)^2}$} circle (1.5pt);
				
		\draw [
		line width=1pt,
		dash pattern=on 0.0001pt off 3pt,
		line cap=round][shorten <=0.5mm][shorten >=0.5mm][-{Stealth[]}] (x3) arc (360:320:2.25cm);

		\draw [
line width=1pt,
dash pattern=on 0.0001pt off 3pt,
line cap=round][shorten <=0.5mm][shorten >=0.5mm][-{Stealth[]}] (x-5) arc (220:180:2.25cm);
		
		\draw [shorten <=0.5mm][shorten >=0.5mm][->](a0)--(x1);
		\draw [shorten <=1mm][shorten >=1mm][->](x1)--(x2);
		\draw [shorten <=1mm][shorten >=1mm][->](x2)--(x3);
		\draw [shorten <=1mm][shorten >=1mm][->](x5)--(x-5);
		\draw [shorten <=1mm][shorten >=1mm][->](x-3)--(x-2);
		\draw [shorten <=1mm][shorten >=1mm][->](x-2)--(x-1);
		\draw [shorten <=1mm][shorten >=1mm][->](x-1)--(x0);

		\draw [
		line width=1pt,
		dash pattern=on 0.0001pt off 3pt,
		line cap=round] (d5) arc (220:253:2.5cm);
		\draw [
		line width=1pt,
		dash pattern=on 0.0001pt off 2pt,
		line cap=round] (ain) arc (270:253:2.5cm);
		\draw [
		line width=1pt,
		dash pattern=on 0.0001pt off 3pt,
		line cap=round] (a5) arc (320:290:2.5cm);
		\draw [
		line width=1pt,
		dash pattern=on 0.0001pt off 2pt,
		line cap=round] (ain) arc (270:290:2.5cm);
		
	\end{tikzpicture}
\end{center}
\begin{center}
	Figure 3. $\{x_{n^2},\ldots,x_{n^2+n},x_{n^2+n+1},\ldots,x_{(n+1)^2}\}$
\end{center}
\medskip

The map $T:X\rightarrow X$ is continuous: when constructing the trajectory $\{x_n\}_{n\in\Z}$ approaching the set $A$, we use the general rule that the consecutive points $x_i,x_{i+1}$ are contained in $U(a_j)$ and $U(a_{j+1})$, respectively, for some $j\in\Z$. In rare exceptions from this rule the points $x_i,x_{i+1}$ are either `far away' from the set $A$ or they both are `close' to the fixed point $a_{\infty}$. Always, when $x_i,x_{i+1}$ is such exception, the point $x_i$ is on the `right side' of $a_{\infty}$ and $x_{i+1}$ is on the `left side' of $a_{\infty}$. In fact we will have infinitely many exceptions from what we call the general rule, but they will be closer and closer to the fixed point $a_{\infty}$. Therefore the continuity of $T$ will not be violated.

\medskip
It follows from the construction of $(X,T)$ that $Rec(X,T)=\{a_{\infty}\}$. 
Now we consider the induced system $(N_{\infty}(X,\A),\langle T^{\infty},\sigma\rangle)$ where $\A=\{n^2\}$. Note that $$\omega_{x_0}^{\A}=(T^{n^2}x_0)_{n\in\Z}=(x_{n^2})_{n\in\Z}.$$
On the one hand, 
we have $$\sigma^{n}\omega_{x_0}^{\A}\rightarrow a_0^{(\infty)}\in\ol{\O(\omega_{x_0}^{\A},\sigma)}\subseteq N_{\infty}(X,\A),\sigma a_0^{(\infty)}=a_0^{(\infty)}$$
where $a_0^{(\infty)}=(\ldots,a_0,a_0,\ldots)$.
So $a_0^{(\infty)}\in Rec(N_{\infty}(X,\A),\langle T^{\infty},\sigma\rangle)$. 
On the other hand, $a_{\infty}^{(\infty)}:=(\ldots,a_{\infty},a_{\infty},\ldots)$ is naturally a recurrent point under $\langle T^{\infty},\sigma\rangle$-action.

Thus, $a_0^{(\infty)},a_{\infty}^{(\infty)}\in Rec(N_{\infty}(X,\A),\langle T^{\infty},\sigma\rangle)$ and $|Rec(\langle T^{\infty},\sigma\rangle)|\geq 2$. 
\end{exam}

\bigskip

\section{The proof of Theorem B}
In this section, we prove Theorem B.
Recall that $F$ is thickly syndetic in $\Z$ iff for every $N\in\N$ the positions where interval with length $N$ runs begin form a syndetic set; 
$F\in\Z^2$ is thickly syndetic iff for every $M,N\in\N$ the positions where block with size $M\times N$ runs begin form a syndetic set of $\Z^2$. 

\medskip
Suppose that $S\subset\Z$ is thickly syndetic. 
Let $x:=1_{\Z\backslash S}\in\{0,1\}^{\Z}$. i.e.,
\begin{equation*}
	x_n=
	\begin{cases}
		0, & \text{if } n\in S;\\
		1, & \text{else.}
	\end{cases}
\end{equation*}
Consider the Bebutov system generated by $x$; that is, let $$X_S=\overline{\{T^{n}x:n\in\Z\}},$$ the orbit closure of $x$. 
Then $(X_S,T)$ is a transitive subshift where $T$ is the shift map. Let $x,y\in X_S$, the metric $\rho$ on $X$ is defined by \begin{equation*}
	\rho(x,y)=
	\begin{cases}
		\dfrac{1}{1+\min\{|n|:x_n\ne y_n\}}, & \text{if } x\ne y;\\
		0, & \text{if } x=y.
	\end{cases}
\end{equation*}

\begin{prop}\label{X_Smin}
	$Min(X,T)=\{{\bf 0}\}$. 
\end{prop}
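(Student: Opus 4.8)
The plan is to establish two facts: that ${\bf 0}$ is a minimal point of $(X_S,T)$, and that ${\bf 0}$ lies in the orbit closure of \emph{every} $y\in X_S$. The second fact forces any minimal point to equal ${\bf 0}$, which gives the proposition.

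First I would record that ${\bf 0}\in X_S$. Since $S$ is thickly syndetic it is in particular thick, so it contains arbitrarily long runs of consecutive integers, and on such a run $x=1_{\Z\setminus S}$ takes the value $0$. Centering a length-$(2k+1)$ run of $0$'s at the origin yields shifts $T^{m_k}x$ with $(T^{m_k}x)[-k;k]=0^{2k+1}$, so $\rho(T^{m_k}x,{\bf 0})\to 0$ and hence ${\bf 0}\in X_S$. As ${\bf 0}$ is a fixed point of $T$, the set $\{{\bf 0}\}$ is minimal and ${\bf 0}\in Min(X_S,T)$.

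The key step is the following claim: for every $y\in X_S$ and every $N\in\N$, the word $0^N$ occurs in $y$. Applying the definition of thick syndeticity to $N$, the set $B_N=\{p\in\Z: x[p;p+N-1]=0^N\}$ is syndetic, say with gap $g=g(N)$, meaning every interval of length $g$ meets $B_N$. Consequently every subword of $x$ of length $g+N-1$ contains an occurrence of $0^N$ (a starting point $p\in B_N$ found in the initial length-$g$ subinterval leaves room for the whole run $0^N$ to fit). Now every $y\in X_S$ is a limit of shifts of $x$, so each finite subword of $y$ is a subword of $x$; in particular $y[0;g+N-2]$ occurs in $x$, hence contains $0^N$, proving the claim.

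Given the claim, for each $k$ I choose, using the occurrence of $0^{2k+1}$ in $y$, an index $m_k$ with $(T^{m_k}y)[-k;k]=0^{2k+1}$; then $T^{m_k}y\to{\bf 0}$, so ${\bf 0}\in\overline{\O(y,T)}$ for every $y\in X_S$. Finally, if $y$ is a minimal point, then $(\overline{\O(y,T)},T)$ is minimal and contains the non-empty closed invariant set $\{{\bf 0}\}$, which forces $\overline{\O(y,T)}=\{{\bf 0}\}$ and thus $y={\bf 0}$. Hence $Min(X_S,T)=\{{\bf 0}\}$. The only delicate point is the key step: converting ``thickly syndetic'' into the quantitative statement that $0^N$ recurs in $x$ with bounded gaps, and then transferring this to an arbitrary $y$ through the inclusion of languages (every finite subword of $y$ is a subword of $x$).
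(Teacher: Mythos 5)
Your proof is correct and follows essentially the same route as the paper's: show ${\bf 0}$ is a (fixed, hence minimal) point, show every $y\in X_S$ contains arbitrarily long blocks $0^N$ so that ${\bf 0}\in\overline{\O(y,T)}$, and conclude uniqueness of the minimal point. The only difference is that the paper dismisses the key step with ``clearly,'' while you carefully justify it via syndeticity of the set of starting positions of $0^N$-runs in $x$ and the fact that every finite subword of $y$ occurs in $x$ — a welcome filling-in of detail, not a different approach.
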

\begin{proof}
	It is obvious that ${\bf 0}$ is a minimal point. For any $y\in X_S$, there exist $n_i\rightarrow\infty$ such that $T^{n_i}x\rightarrow y$ where $x=1_{\Z\setminus S}$. Clearly, the word $0^{(n)}$ of arbitrary length $n$ occurs in $y$ infinitely. Thus, ${\bf 0}\in\ol{\O(y,T)}$ which implies that ${\bf 0}$ is the unique minimal point.
\end{proof}

\medskip

Let $d\in\N$ and $\A=\{p_1,p_2,\ldots,p_d\}$ be a collection of integral polynomials with $p_i(0)=0,1\leq i\leq d$.
Then we have

\begin{prop}\label{min-N_infty}
	$Min(N_{\infty}(X_S,\A),\langle T^{\infty},\sigma\rangle)=\{{\bf 0}^{(\infty)}\}$. 
\end{prop}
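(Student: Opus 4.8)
The plan is to recognize Proposition \ref{min-N_infty} as an immediate specialization of the general correspondence already built up in Section 3, rather than to argue about $N_{\infty}(X_S,\A)$ from scratch. All the genuine content has in fact been packaged into two earlier results: the combinatorial computation of the minimal points of the Bebutov subshift, and the structural theorem relating minimal points upstairs and downstairs.

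First I would invoke Proposition \ref{X_Smin}, which records that $(X_S,T)$ has a unique minimal point, namely ${\bf 0}$; in particular $|Min(X_S,T)|=1$. Next I would check that the standing hypotheses of Theorem \ref{iff1} are met for the pair $(X_S,T)$ and the family $\A$: the space $X_S$ is a subshift, hence a compact metric space with $T$ a homeomorphism, so $(X_S,T)$ is a genuine t.d.s., and by assumption $\A=\{p_1,\dots,p_d\}$ consists of integral polynomials with $p_i(0)=0$. With these in place I would apply Theorem \ref{iff1} directly, taking the distinguished point to be $x={\bf 0}$: from $Min(X_S,T)=\{{\bf 0}\}$ the theorem yields at once
\begin{equation*}
Min(N_{\infty}(X_S,\A),\langle T^{\infty},\sigma\rangle)=\{{\bf 0}^{(\infty)}\},
\end{equation*}
where ${\bf 0}^{(\infty)}$ is the constant two-sided sequence all of whose coordinates equal ${\bf 0}^{\otimes d}=({\bf 0},\dots,{\bf 0})\in X_S^{\,d}$.

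There is essentially no new obstacle here, since the hard work is entirely contained in Theorem \ref{iff1} (equivalently, the forward implication of Theorem \ref{min-N}) and in the verification of Proposition \ref{X_Smin}. The only point deserving care is bookkeeping: one must be sure that the unique minimal point is correctly identified as ${\bf 0}^{(\infty)}$ and not merely that there is exactly one minimal point. This is exactly what Theorem \ref{iff1} supplies, as it pins down the minimal set to be $\{x^{(\infty)}\}$ for the unique minimal $x$ of the base; instantiating $x={\bf 0}$ gives the stated equality and completes the proof.
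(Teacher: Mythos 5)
Your proposal is correct and takes essentially the same route as the paper, which proves this proposition by combining Proposition \ref{X_Smin} with Theorem \ref{min-N}. If anything, your choice to cite Theorem \ref{iff1} directly is slightly sharper, since Theorem \ref{min-N} as stated only asserts that the induced system has a single minimal point, whereas Theorem \ref{iff1} explicitly identifies that point as $x^{(\infty)}$ for the unique minimal point $x$ of the base, which is exactly the bookkeeping needed to conclude the minimal point is ${\bf 0}^{(\infty)}$.
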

\begin{proof}
	By Proposition \ref{X_Smin} and Theorem \ref{min-N}, 
    we have that ${\bf 0}^{(\infty)}$ is the only minimal point of $(N_{\infty}(X_S,\A),\langle T^{\infty},\sigma\rangle)$.
\end{proof}

\medskip

\begin{lem}\label{min-ts}
Let $(X,\langle T,S\rangle)$ be a $\Z^2$-action t.d.s. If $x$ is a transitive point, $K$ is the unique minimal subset. Then for any neighborhood $U$ of $K$, $$N_{\Z^2}(x,U):=\{(m,n):T^{m}S^{n}x\in U\}$$ is thickly syndetic.
\end{lem}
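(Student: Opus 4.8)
The plan is to reduce the thick syndeticity of $N_{\Z^2}(x,U)$ to a statement about ordinary syndeticity of return times to \emph{arbitrary} open neighborhoods of $K$, and then to settle the latter by a standard compactness argument that exploits that $K$ is the unique minimal set.

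First I would unwind the definition of thickly syndetic in $\Z^2$. By definition $N_{\Z^2}(x,U)$ is thickly syndetic iff for every box $W=\{0,\ldots,M-1\}\times\{0,\ldots,N-1\}$ the set of base points where such a block fits, namely $\{(a,b)\in\Z^2:(a,b)+W\subseteq N_{\Z^2}(x,U)\}$, is syndetic. The key observation is that $(a,b)+W\subseteq N_{\Z^2}(x,U)$ says precisely that $T^aS^bx\in\bigcap_{(i,j)\in W}T^{-i}S^{-j}U=:U_W$, so the set of base points equals $N_{\Z^2}(x,U_W)$. Since $K$ is $\langle T,S\rangle$-invariant we have $T^{-i}S^{-j}K=K\subseteq U$ for every $(i,j)$, whence $K\subseteq U_W$, and $U_W$ is open as a finite intersection of open sets. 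Thus it suffices to prove the following claim: for every open $V\supseteq K$, the return time set $N_{\Z^2}(x,V)$ is syndetic.

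Second I would prove this syndeticity claim by compactness. Put $C=X\setminus V$, a compact set disjoint from $K$. Because $K$ is the unique minimal subset, every orbit closure contains a minimal set and hence contains $K$; in particular for each $z\in C$ the open set $V$ meets $\ol{\O(z,\langle T,S\rangle)}$, so there is a group element $g_z\in\langle T,S\rangle$ with $g_zz\in V$, and by continuity an open neighborhood $O_z\ni z$ with $g_zO_z\subseteq V$. The family $\{O_z:z\in C\}$ covers $C$; I extract a finite subcover $O_{z_1},\ldots,O_{z_k}$ with associated elements $g_1,\ldots,g_k$ and set $F=\{e,g_1,\ldots,g_k\}$, adjoining the identity $e$ to handle points already lying in $V$. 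Then every $z\in X$ satisfies $gz\in V$ for some $g\in F$. Applying this to $z=T^mS^nx$ for an arbitrary $(m,n)\in\Z^2$ and using $T\c S=S\c T$ shows that $(m,n)$ lies in a fixed bounded translate of $N_{\Z^2}(x,V)$, i.e. $\Z^2=N_{\Z^2}(x,V)+(-F)$ with $-F$ finite, which is exactly syndeticity.

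The step that requires the most care is the first reduction: correctly matching the $\Z^2$-definition of thickly syndetic (blocks fitting inside the set, with syndetic base points) to the identity $\{(a,b):(a,b)+W\subseteq N_{\Z^2}(x,U)\}=N_{\Z^2}(x,U_W)$, and checking that $U_W$ is again an open neighborhood of $K$ via the invariance of $K$. The compactness argument is routine; I note in passing that it uses only that $K$ is the unique minimal set (so that $K$ sits inside every orbit closure), while the transitivity of $x$ is not actually needed for the conclusion.
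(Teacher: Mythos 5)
Your proof is correct and takes essentially the same route as the paper's: the paper first proves that $N_{\Z^2}(x,V)$ is syndetic for every open $V\supseteq K$ by the same compactness argument resting on uniqueness of the minimal set (covering $X$ by the preimages $T^{-i}S^{-j}V$), and then deduces thick syndeticity by passing to smaller neighborhoods $U_i\subseteq U$ of $K$ whose return times mark the starting corners of $i\times i$ blocks --- exactly your sets $U_W=\bigcap_{(i,j)\in W}T^{-i}S^{-j}U$, justified by the same invariance-of-$K$ observation. Your closing remark is consistent with the paper as well: its proof likewise never uses the transitivity of $x$.
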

\begin{proof}
Let $U$ be any neighborhood of $K$. Since $K$ is the unique minimal subset of $X$, $U$ contains all closed invariant subsets of $X$. Then $X=\bigcup_{i,j=-\infty}^{\infty}T^{-i}S^{-j}U$. By the compactness of $X$, $X=\bigcup_{m,n=-N}^{N}T^{-i}S^{-j}U$ for some $N\in\N$. Hence, $$N_{\Z^2}(x,U)=\{(m,n):T^{m}S^{n}x\in U\}$$ is a syndetic set in $\Z^2$. 

\medskip
We may suppose $U_i\subseteq U$ is neighborhood of $K$ such that $T^{j_1+k_1}S^{j_2+k_2}x\in U$ for any $1\leq k_1,k_2\leq i$ whenever $T^{j_1}S^{j_2}x\in U_i$. Since for $i\in\N$, $N_{\Z^2}(x,U_i)$ is syndetic, we have $N_{\Z^2}(x,U)$ is thickly syndetic.

\end{proof}

\medskip

Now we are ready to prove Theorem B.

\begin{proof}[Proof of Theorem B]
	Let $d\in\N$, $p_i$ be integral polynamials with $p_i(0)=0,1\leq i\leq d$, and $S\subseteq \Z$ be thickly syndetic. Assume $(X_S,T)$ is the t.d.s. defined at the beginning of the section. Let $x=1_{\Z\setminus S}$. We know that $$N_{T}(x,B_1({\bf 0}))=\{n\in\Z:T^{n}x\in B_1({\bf 0})\}=S$$ where $B_1({\bf 0})=\{y\in X_S:\rho({\bf 0},y)<1\}:=\{y\in X_S:y_1=0\}$. 
	
	\medskip
	By Theorem \ref{min-N_infty}, the induced system $(N_{\infty}(X_S,\A),\langle T^{\infty},\sigma\rangle)$ has a unique minimal point ${\bf 0}^{(\infty)}$. Then for any neighborhood $\widetilde{U}$ of ${\bf 0}^{(\infty)}$, We have that 
	$$N^{\A}_{\Z^2}(x,U)=\{(m,n)\in\Z^2:(T^{\infty})^{m}\sigma^{n}\omega_x^{\A}\in \widetilde{U}\}$$
    is thickly syndetic in $\Z^2$ (by Lemma \ref{min-ts}) where $\widetilde{U}=\prod_{j=-\infty}^{-1}X^d\times U^{d}\times \prod_{j=1}^{\infty}X^d$, $U\subset X$ is a neighborhood containing ${\bf 0}$.  Specially, let
    $$\widetilde{U}=\prod_{j=-\infty}^{-1}X^d\times (B_1({\bf 0}))^{d}\times \prod_{j=1}^{\infty}X^d.$$
    It follows that
    \begin{align*}
    N^{\A}_{\Z^2}(x,B_1({\bf 0}))
    &= \{(m,n)\in\Z^2:(T^{\infty})^{m}\sigma^{n}\omega_x^{\A}\in \widetilde{U}\}\\
    &= \{(m,n)\in\Z^2:T^{m+p_1(n)}x\in B_1({\bf 0}),\ldots,T^{m+p_d(n)}x\in B_1({\bf 0})\}\\
    &= \{(m,n)\in\Z^2:m+p_1(n),\ldots,m+p_d(n)\in S\}.
    \end{align*}
Thus, $$\{(m,n)\in\Z^2:m+p_1(n),m+p_2(n),\ldots,m+p_d(n)\in S\}$$ is a thickly syndetic set of $\Z^2$.

\end{proof}

\bigskip

\section{The proof of Theorem C}

In this section, we construct a t.d.s. $(X,T)$ that satisfies the Theorem C. 
\subsection{The construction of the example}
Let $\Omega=\{0,1\}^{\Z^+}$. The mapping $T:\Omega\rightarrow\Omega$ by $(T x_i):=x_{i+1},i\in\Z^+$. $(\Omega,T)$ is a one-sided shift system.

Assume 
\begin{align*}
	A_1&=(1),\\
	A_2&=(10^{(b_1)}101),\\
	\ldots&\\
	A_{n+1}&=A_n0^{(b_n)}A_n0^{(n)}A_n,
\end{align*}
Let $b_n>15a_{n-1}$ where $a_n=|A_n|$ is the length of $A_n$.

Let  $x=\lim_{n\rightarrow\infty}A_n^{\infty}$ and $X=\ol{\O(x,T)}$, then $(X,T)$ is a subshift.
By the construction, $x$ is a recurrent point and $(X,T)$ is transitive.
Let $x,y\in X$, the metric $\rho$ on $X$ is defined by 
\begin{equation*}
	\rho(x,y)=
	\begin{cases}
		1/(1+\min\{n:x_n\ne y_n\}), & \text{if } x\ne y;\\
		0, & \text{if } x=y.
	\end{cases}
\end{equation*}


\begin{prop}
	$(X,T)$ is strong mixing.
\end{prop}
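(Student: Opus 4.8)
The plan is to verify topological (strong) mixing directly at the level of words, exploiting the self-similar recursion $A_{n+1}=A_n0^{(b_n)}A_n0^{(n)}A_n$. Since the cylinders $C_0[w]$ (with $w$ a factor of $x$) form a basis for the topology of $X$, and $N_T(U,V)\supseteq N_T(U',V')$ whenever $U'\subseteq U$ and $V'\subseteq V$, it suffices to prove that for any two admissible words $u,v$ the set
$N_T(C_0[u],C_0[v])=\{m\ge 0:\exists\, z\in X,\ z[0;|u|-1]=u,\ z[m;m+|v|-1]=v\}$
is cofinite. First I would record the standard fact that the language of $X=\overline{\O(x,T)}$ is exactly the set of factors of $x$, and that every factor of $x$ sits inside some $A_K$ once $a_K$ exceeds its first occurrence. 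Hence I may fix $K$ so large that both $u$ and $v$ are factors of $A_K$, and write $A_K=\alpha u\beta=\gamma v\delta$ for suitable (possibly empty) words.

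The crucial structural step is the claim that $A_K0^{(g)}A_K$ is an admissible word for \emph{every} integer $g\ge K$. This is where the central blocks $0^{(n)}$ do the work: for each $n\ge K$ the word $A_n0^{(n)}A_n$ occurs inside $A_{n+1}$, and since every $A_n$ with $n\ge K$ both begins and ends with $A_K$ (an immediate induction from $A_{n+1}=A_n0^{(b_n)}A_n0^{(n)}A_n$), the block $A_K0^{(n)}A_K$ occurs inside $A_n0^{(n)}A_n$, hence inside $x$. As $n$ runs through $K,K+1,K+2,\dots$ the gap length $n$ runs through every integer $\ge K$, which is precisely what yields cofiniteness rather than mere infiniteness.

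With this lemma in hand the conclusion is bookkeeping. In the admissible word $A_K0^{(g)}A_K$ the marked copy of $u$ begins at coordinate $|\alpha|$, while the marked copy of $v$ inside the second $A_K$ begins at coordinate $a_K+g+|\gamma|$; translating the occurrence of $u$ to the origin (i.e. passing to a suitable shift $T^p x\in X$) produces a point of $X$ carrying $u$ at coordinate $0$ and $v$ at coordinate $m:=a_K+g+|\gamma|-|\alpha|$. Letting $g$ range over all integers $\ge K$, the quantity $m$ ranges over every integer $\ge m_0:=a_K+K+|\gamma|-|\alpha|$, so $N_T(C_0[u],C_0[v])\supseteq[m_0,\infty)$ is cofinite, and strong mixing follows.

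The main obstacle—really the only delicate point—is producing all sufficiently large gaps simultaneously. The long $b_n$-gaps are of no help here, since their lengths form a sparse sequence; the whole argument hinges on the observation that the central gaps $0^{(n)}$ have lengths filling a cofinite subset of $\Z$, together with the ``$A_n$ begins and ends with $A_K$'' property that lets a single fixed pair of factors $u,v$ be read off from every $A_K0^{(n)}A_K$. Once that lemma is isolated, the remaining indexing is routine.
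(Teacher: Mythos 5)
Your proof is correct, and its combinatorial core coincides with the paper's: everything rests on the fact that $A_K0^{(g)}A_K$ occurs in $x$ for every $g\geq K$ (the paper states this as ``$A_n0^{(m)}A_n$ occurs in $x$ for any $m\geq n$''), which is exactly your key lemma, obtained from the central blocks $0^{(n)}$ together with the observation that each $A_n$ with $n\geq K$ begins and ends with $A_K$. Where you genuinely diverge is in the reduction from arbitrary open sets to this fact. You verify mixing directly on the cylinder basis: given admissible words $u,v$, you fix a single $A_K$ containing both and read off, inside each $A_K0^{(g)}A_K$, an occurrence of $u$ at the origin and of $v$ at $m=a_K+g+|\gamma|-|\alpha|$, so that $N_T(C_0[u],C_0[v])$ contains the full tail $[m_0,\infty)$. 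The paper instead proves cofiniteness only for the self-hitting sets $N_T([A_j],[A_j])$ and then invokes transitivity: for open $U,V$ it chooses $n$ with $C=U\cap T^{-n}V\neq\varnothing$, uses $N_T(U,V)\supset n+N_T(C,C)$, and squeezes a set of the form $T^k([A_j])$ into $C$, where $T^kx\in C$ is a visit of the transitive point and $j$ is large. Your route is more elementary and self-contained --- no transitivity trick, and no need to check that $T^k([A_j])\subset C$ for large $j$ --- at the price of the (routine) bookkeeping with the offsets $|\alpha|,|\gamma|$; the paper's route buys a reusable general principle (for a transitive system it suffices to control self-return times of basic neighborhoods along the transitive orbit) and never needs to place $u$ and $v$ inside a common $A_K$. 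Both arguments are complete proofs of the proposition.
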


\begin{proof}
	By the construction of $x$, $A_{n}0^{(m)}A_n$ occurs in $x$ for any $m\geq n$, hence, $N_{T}([A_n],[A_n])$ is a cofinite set where $[A_n]:=\{x\in X:x[0;|A_n-1|]=A_n\}$. 
	
	Since $(X,T)$ is transitive, for any non-empty open set $U,V$, there exists $n\in\N$ such that $C=U\cap T^{-n}V\ne\varnothing$. Then
	$$N_{T}(U,V)=N_{T}(T^{-n}U,T^{-n}V)\supset N_{T}(T^{-n}C,C)\supset n+N(C,C).$$
	Assume that $T^{k}x\in C$ and take $j$ that $T^{k}([A_j])\subset C$, we have $$N_{T}(U,V)\supset n+N_{T}([A_k],[A_k]),$$ which implies that the system is strong mixing.
\end{proof}

\begin{prop}
	For any $y\in X, y\ne {\bf 0}$, $(y,y)$ is not $T\times T^2$-recurrent. 
\end{prop}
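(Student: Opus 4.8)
The plan is to argue by contradiction: assume $(y,y)$ is $T\times T^2$-recurrent, i.e. that there are $n_i\to\infty$ with $T^{n_i}y\to y$ and $T^{2n_i}y\to y$, and show that this forces a three-term arithmetic progression in the self-similar skeleton of $x$ that the construction forbids. First I would normalize $y$. Since $y\neq{\bf 0}$, let $q$ be the position of the first $1$ in $y$; applying the continuous map $T^q\times T^q$ to the two recurrence relations and using $T^qT^m=T^mT^q$ shows that $(T^qy,T^qy)$ is again $T\times T^2$-recurrent while $(T^qy)_0=y_q=1$, so after replacing $y$ by $T^qy$ I may assume $y_0=1$. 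In the shift metric, $T^ny\to y$ and $T^{2n}y\to y$ means precisely that for every $L$ there is an arbitrarily large $n$ with $y[0;L]=y[n;n+L]=y[2n;2n+L]$. Writing $P:=y[0;L]$ (a word beginning with $1$) and using that the finite factor $y[0;2n+L]$ occurs in $x$, say $y[0;2n+L]=x[t;t+2n+L]$, I obtain three occurrences of $P$ in $x$ at the positions $t,\,t+n,\,t+2n$, with $L\to\infty$ while $n$ may be arbitrarily large.

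Next I would invoke the hierarchical structure. A recurrent $y$ with $y_0=1$ has infinitely many $1$'s, hence (being an infinite factor of $x$, where any factor all of whose $0$-runs are $\le m$ has bounded length) its prefixes contain unbounded $0$-runs and, for $L$ large, a full copy of $A_k$ with $k\to\infty$. Thus $x$ carries a copy of $A_k$ at each of $t+o,\,t+n+o,\,t+2n+o$ for a fixed offset $o$. Here the hypothesis $b_n>15a_{n-1}$ enters, guaranteeing \emph{recognizability}: for all large $k$ every occurrence of $A_k$ in $x$ sits at a canonical level-$k$ position, because the long $0$-run $b_{k-1}$ inside $A_k$ can only be matched against a genuine $b_{k-1}$-gap of $x$, and an induction on the level then pins down the whole alignment. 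Consequently $t+o,\,t+n+o,\,t+2n+o$ all belong to the set $C_k$ of canonical starting positions of level-$k$ blocks; that is, $C_k$ contains a three-term arithmetic progression with common difference $n$, for arbitrarily large $k$. (Equivalently, and avoiding recognizability, one may track an internal maximal run of $G$ zeros inside $P$: its three copies give three genuine length-$G$ gaps of $x$ in arithmetic progression, and the position set of length-$G$ gaps is, up to a shift, the same self-similar set.)

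The heart of the matter is then the purely combinatorial claim that for all large $k$ the set $C_k$ contains no three-term arithmetic progression. To prove it I would take a hypothetical progression $c_1<c_2<c_3$ in $C_k$ with $c_2-c_1=c_3-c_2$, and let $A_N$ be the smallest level block containing all three; by minimality the three points are distributed among the three sub-blocks of $A_N=A_{N-1}0^{b_{N-1}}A_{N-1}0^{N-1}A_{N-1}$, whose two top gaps have the grossly unequal lengths $b_{N-1}$ and $N-1$. The decisive feature is this asymmetry together with the fact that $b_{N-1}$ is the \emph{unique} longest $0$-run inside $A_N$: since $c_2$ carries a $1$, this big gap lies entirely in only one of the equal halves $[c_1,c_2]$, $[c_2,c_3]$, and a length comparison driven by $b_n>15a_{n-1}$ and by $b_{N-1}\neq N-1$ shows the half containing it is strictly longer than the other, contradicting equality; the residual configurations, in which the points avoid the top gap and fall into the second and third sub-blocks, are reduced to the same extremal-gap analysis one level down. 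This contradicts the progression produced above and completes the proof.

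The step I expect to be the main obstacle is precisely this last claim, and specifically making the length comparison airtight across \emph{all} placements of $c_1,c_2,c_3$ among the sub-blocks: this is where the exact growth rate $b_n>15a_{n-1}$ must be used quantitatively, and where one must rule out the delicate case in which the two halves straddle several comparable medium-sized gaps rather than a single dominant one. Everything upstream—the reduction to $y_0=1$, the translation of recurrence into an arithmetic progression of factor occurrences, and the passage to canonical positions via recognizability—is routine once recognizability is in hand, so the genuine content is the $3$-AP-freeness of the self-similar position set $C_k$, which is exactly where the unequal spacers $0^{b_n}$ and $0^{n}$ do the work.
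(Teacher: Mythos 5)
Your opening reduction is sound: $T\times T^2$-recurrence of $(y,y)$ does give, for every $L$ and arbitrarily large $n$, occurrences of $P=y[0;L]$ in $x$ at three positions $t,\ t+n,\ t+2n$, and $P$ contains a copy of $A_k$ with $k\to\infty$. But in transferring to $x$ you discard the one feature the proof must turn on: in the coordinates of $y$ the progression is \emph{anchored at the origin}, $\{0,n,2n\}$, and the paper's argument lives entirely off this anchoring. The paper decomposes the tail of $y$ into $1$-carrying blocks separated by $0$-gaps, each gap much longer than everything preceding it, so that $y(n_i)=1$ forces the coordinate $2n_i$ into the gap following the block of $n_i$ (and symmetrically $y(2n_i)=1$ forces $n_i$ into a gap); no statement about unanchored progressions is ever needed. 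Your substitute claim --- that for large $k$ the set $\Lambda_k$ of canonical starting positions of $A_k$ in $x$ is $3$-AP-free --- is not only left unproven (you yourself flag the unresolved cases), it is \emph{false} under the paper's stated hypothesis $b_n>15a_{n-1}$. Indeed one may choose $b_{k+1}=b_k$ with common value exceeding $15a_k$; since $a_{k+1}=3a_k+b_k+k$, the three canonical positions $c_1=a_k+b_k$ (second copy of $A_k$ in the first $A_{k+1}$), $c_2=a_{k+1}+b_{k+1}$ (first copy in the second $A_{k+1}$) and $c_3=a_{k+1}+b_{k+1}+2a_k+b_k+k$ (third copy in the second $A_{k+1}$) satisfy $c_2-c_1=c_3-c_2=2a_k+b_k+k$. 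Making this choice at every odd index ($b_1=b_2$, $b_3=b_4,\ldots$) yields a system satisfying all stated requirements in which \emph{every} $\Lambda_k$ contains $3$-term APs of arbitrarily large common difference (an AP of $A_j$-positions is an AP of $A_k$-positions for $j>k$, as $A_j$ begins with $A_k$), so no weakening of your lemma survives either. Yet the proposition remains true for such a system: these progressions are harmless precisely because they are unanchored --- the words following the three copies of $A_k$ already disagree within distance $a_k+k$, so they never extend to occurrences of a longer common prefix based at a single point. That lost anchoring is the genuine gap.

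A second, independent problem is the recognizability step. The justification ``the long $0$-run $0^{b_{k-1}}$ inside $A_k$ can only be matched against a genuine $b_{k-1}$-gap of $x$'' fails: maximal $0$-runs of length exactly $b_{k-1}$ also occur in $x$ as the \emph{short} spacers $0^{m}$ with $m=b_{k-1}$ (spacer lengths run through all positive integers), and such a spacer is flanked on the left by a word ending in $A_{k-1}$ and on the right by a word beginning with $A_{k-1}$, exactly like a genuine level-$(k-1)$ gap; moreover $(b_n)$ need not be monotone under $b_n>15a_{n-1}$, so neighboring levels can share gap lengths. The same objection defeats your parenthetical variant that tracks a maximal run of $G$ zeros. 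If you want to complete a proof, keep the anchor instead of passing to $\Lambda_k$: after normalizing $y_0=1$ as you do, show that the coordinate sequence of $y$ splits into blocks containing all its $1$'s, separated by $0$-gaps each of which dominates the entire initial segment preceding it (this is where a growth condition on the $b_n$ enters), and conclude that $y(n)=1$ places $2n$ in a gap while $y(2n)=1$ places $n$ in a gap --- which is exactly the paper's two-case argument.
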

\begin{proof}	
	We assume $y\ne {\bf 0}$ is recurrent. Then the word $1$ occurs in $y$ infinitely.  Since the word $1$ always occurs in the form of $A_n$. It implies that  $A_n$ occurs in $y$ infinitely for a fixed $n$. Then there is a $k\in\Z^+$ such that $(T^{k}y)[0;|A_n|-1]=A_n$.
	
	We can take suitable $p_n$ such that
$$T^{k}y=C_1 0^{(b_{p_1})} C_2 0^{(b_{p_2})} C_3 0^{(b_{p_3})}\ldots C_m 0^{(b_{p_m})}\ldots$$
	where the word $0^{(b_{p_m})}$ does not occur in any $C_1,\ldots,C_m$. 
	By the structure of $X$, we have that $C_1=A_nD$ for some $D$, $C_m$ must be a subword of $A_{p_m}$ and $C_{m+1}$ must be a subword of $A_{p_m}0^{(m)}A_{p_m}$.
	We note $x(n)$ is the element at $n$-th position of $x$.
	
	\medskip
	\noindent{\bf Case 1.}
	If $\{n_i\}$ is a sequence such that $T^{n_i}y\rightarrow y$. 
	We know that $T^{n_i}(T^{k}y)\rightarrow T^{k}y$, $T^{k}y(0)=y(k)=1$. By passing to some subsequence if necessary, we may soppose the $n_i$-th position of $T^{k}y$ place at $C_{p_{m(i)}}$ for some $m(i)$. i.e.,  $$\sum_{k=1}^{m(i)-1}(|C_k|+b_{p_k})\leq n_i<\sum_{k=1}^{m(i)-1}(|C_k|+b_{p_k})+|C_{m(i)}|.$$ 
	
	Since $C_1$ is a subword of $A_{p_1}$ and $C_{m+1}$ is a subword of $A_{p_m}0^{(m)}A_{p_m}$. We have
	\begin{equation*}
		\begin{cases}
			|C_{m+1}|\leq|A_{p_m}0^{(m)}A_{p_m}|<b_{p_m}, \\
			\sum_{k=i}^{m-1}(|C_k|+b_{p_k})+|C_m|\leq b_{p_{m+1}}.
		\end{cases} 
	\end{equation*}
	
	Then 
	$$\sum_{k=1}^{m(i)-1}(|C_k|+b_{p_k})+|C_{m(i)}|< 2n_i<\sum_{k=1}^{m(i)}(|C_k|+b_{p_k}),$$
	and the $2n_i$-th position of $T^{k}y$ place at $0^{(b_{p_{m(i)}})}$.
	i.e., $$y(2n_{i}+k)=T^{k}y(2n_i)=0,\forall i\in\N.$$
	We have $\rho(T^{2n_i}y,y)>1/(k+1)$ for any $i\in\N$, which implies that $T^{2n_i}y\nrightarrow y$.
	
	\medskip
	\noindent{\bf Case 2.}
	If $\{n_i\}$ is a sequence such that $T^{2n_i}y\rightarrow y$. 
	We know that $T^{2n_i}(T^{k}y)\rightarrow T^{k}y$, $T^{k}y(0)=y(k)=1$. By passing to some subsequence if necessary, we may soppose the $2n_i$-th position of $T^{k}y$ place at $C_{p_{m(i)}}$ for some $m(i)\geq 3$. i.e.,  $$\sum_{k=1}^{m(i)-1}(|C_k|+b_{p_k})\leq 2n_i<\sum_{k=1}^{m(i)-1}(|C_k|+b_{p_k})+|C_{m(i)}|.$$ 
	
	Since $C_m$ is a subword of $A_{p_m}$ and $C_{m+1}$ is a subword of $A_{p_m}0^{(m)}A_{p_m}$, we have
	$$\sum_{k=1}^{m(i)-2}(|C_k|+b_{p_k})+|C_{m(i-1)}|< n_i<\sum_{k=1}^{m(i)-1}(|C_k|+b_{p_k}),$$
	the $n_i$-th position of $T^{k}y$ always place at $0^{(b_{p_{m(i)-1}})}$.
	Then  $y(n_{i}+k)=T^{k}y(n_i)=0,\forall i\in\N.$
	We have $\rho(T^{n_i}y,y)>1/(k+1)$ for any $i\in\N$, which implies that $T^{n_i}y\nrightarrow y$.
\end{proof}

By the propositions above, the set of multiple recurrent points of $(X,T)$ is $\{{\bf 0}\}$, so not dense in $X$. This finishes the proof of Theorem C.

\subsection{An question}
It remains to be a question that
\begin{ques}
	Let $(X,T)$ be transitive. Under which 
	condition of $(X,T)$, the set $$\{x\in X:\forall U\ni x,\exists\  n \ \text{s.t.}\ T^{p_1(n)}x\in U,\ldots,T^{p_d(n)}x\in U\}$$ will always be dense in $X$ for all integral polynomials $p_1,\ldots,p_d$ vanishing at zero?
\end{ques}
We denote the effective condition as $P$. 
It is clear that $P$ can be ``$(X,T)$ is minimal'' or ``$(X,T)$ is an $E$-system \footnote{$(X,T)$ is called an {\it E-system} if it is transitive and there is an $T$-invariant measure $\mu$ such that $supp(\mu):=\{x\in X:\text{for any neighborhood}\ U \ \text{of}\  x,\mu(U)>0\}=X$.}''.
And Theorem C has mean that $P$ can not be any mixing property of $(X,T)$.

\bigskip

\end{document}